\definecolor{darkgreen}{rgb}{0,0.4,0}
\definecolor{BrickRed}{rgb}{0.65,0.08,0}
\newcommand{\tL}{\mathtt 1}                            
\newcommand{\tO}{\mathtt 0}                            
\newcommand{\N}{\mathbb{N}}
\newcommand{\Z}{\mathbb{Z}}
\newcommand{\dens}{\operatorname{dens}}
\DeclareMathOperator{\e}{e}     
\newcommand{\digitsum}{\mathsf s}
\newcommand{\digitsumdifference}{d_{\mathsf s}}
\newcommand{\digitsumdensity}{\sigma_{\mathsf s}}
\newcommand{\eqdef}{\coloneqq}
\newcommand{\blocknumber}{\mathsf r}
\newcommand{\alphap}{\alpha^*}
\newcommand{\betap}{\beta^*}
\newcommand{\alphaq}{\widetilde\alpha}
\newcommand{\betaq}{\widetilde\beta}
\newcommand{\Sp}{S^*}
\newcommand{\Sq}{\widetilde S}
\newenvironment{introtheorem}[1]
  {\intro}
  {\endintro}
\newtheorem{theorem}{Theorem}[section]
\newtheorem{proposition}[theorem]{Proposition}
\newtheorem{lemma}[theorem]{Lemma}
\theoremstyle{remark}
\newtheorem*{remark}{Remark}
\newtheorem*{remarks}{Remarks}
\numberwithin{equation}{section}
\definecolor {colorLS}{rgb}{0.9,0.2,0.9}
\definecolor {colorBS}{rgb}{0.2,0.6,0.2}
\definecolor {colorMagenta}{rgb}{0.6,0,0.3}
\title{Block occurrences in the binary expansion}
\author{
\begin{tabular}{c@{\hspace{0.5cm}}c@{\hspace{0.5cm}}c}
\begin{tabular}{c}Bartosz Sobolewski\\Jagiellonian University, Krak\'ow\\
Poland
\end{tabular}
&
\begin{tabular}{c}Lukas Spiegelhofer
\\Montanuniversit\"at Leoben\\
Austria
\end{tabular}
\end{tabular}
}
\date{}
\begin{document}
\maketitle
\begin{abstract}
The binary sum-of-digits function $\digitsum$ returns the number of ones in the binary expansion of a nonnegative integer.
Cusick's Hamming weight conjecture states that, for all integers $t\geq 0$, the set of nonnegative integers $n$ such that $\digitsum(n+t)\geq \digitsum(n)$ has asymptotic density strictly larger than $1/2$.

We are concerned with the block-additive function $\blocknumber$ returning the number of (overlapping) occurrences of the block $\tL \tL$ in the binary expansion of $n$.
The main result of this paper is a central limit-type theorem for the difference $\blocknumber(n+t)-\blocknumber(n)$:
the corresponding probability function is uniformly close to a Gaussian, where the uniform error tends to $0$ as the number of blocks of ones in the binary expansion of $t$ tends to $\infty$.

\end{abstract}

\renewcommand{\thefootnote}{\fnsymbol{footnote}} 
\footnotetext{\emph{2020 Mathematics Subject Classification.} Primary: 11A63, 05A20; Secondary: 05A16,11T71}

\footnotetext{\emph{Key words and phrases.} Cusick conjecture, Hamming weight, sum of digits}
\footnotetext{
Bartosz Sobolewski was supported by the grant of the National Science Centre (NCN), Poland, no.\ UMO-2020/37/N/ST1/02655.\\
Lukas Spiegelhofer acknowledges support by the FWF--ANR project ArithRand (grant numbers I4945-N and ANR-20-CE91-0006), and by the FWF project P36137-N.
}
\renewcommand{\thefootnote}{\arabic{footnote}}

\section{Introduction}
Every nonnegative integer $n$ admits a unique representation
\begin{equation}\label{eqn_binary_expansion}
n=\sum_{j\ge0}\delta_j2^n,
\end{equation}
where $\delta_j\in\{0,1\}$,
which is called the \emph{binary expansion} of $n$.
Each digit $\delta_j$ is therefore a function $\delta_j(\cdot)$ of $n$. 
The central question we ask is the following:
\begin{equation}\label{eqn_guiding_question}
\mbox{How does the binary expansion behave under addition?}
\end{equation}
As a first step towards a possible answer to the question, we consider
the \emph{binary sum-of-digits function} $\digitsum$ of a nonnegative integer $n$, defined by
\[\digitsum(n)\eqdef\sum_{j\ge0}\delta_j(n),\]
and the differences
\[\digitsumdifference(t,n)\eqdef \digitsum(n+t)-\digitsum(n).\]

The sum-of-digits function $\digitsum$ appears when the $2$-valuation of binomial coefficients is considered.
We have the identity
\begin{equation}\label{eqn_legendre}
\digitsum(n+t)-\digitsum(n)=\digitsum(t)-\nu_2\biggl(\binom{n+t}t\biggr),
\end{equation}
where
$\nu_2(a)=\max\{k\in\mathbb N:2^k\mid a\}$,
which follows from Legendre's formula.
The $2$-valuation of $\binom{n+t}t$ is also the number of \emph{carries} that appears when adding $n$ and $t$ in binary (Kummer~\cite{Kummer1852}).
It appears that both sides in~\eqref{eqn_legendre}
are nonnegative more than half of the time ---
more precisely, T.~W.~Cusick's Hamming weight Conjecture~\cite{DrmotaKauersSpiegelhofer2016} states that for each integer $t\ge0$, we have
\begin{equation}\label{eqn_cusick}
\sum_{j\ge0}\digitsumdensity(t,j)>1/2,
\end{equation}
where
\begin{equation}\label{eqn_delta_def}
\digitsumdensity(t,j)\eqdef\dens\bigl\{n \in \N: \digitsumdifference(t,n)=j\bigr\},
\end{equation}
and $\dens A$ is the asymptotic density of a set $A\subseteq \N$.
The asymptotic density exists in this case, as the sets in~\eqref{eqn_delta_def} are unions of arithmetic progressions (B\'esineau~\cite{Besineau1972}),
and
\[\sum_{j\in\mathbb Z}\digitsumdensity(t,j)=1\]
for all $t\in\mathbb N$.
We have the recurrence~\cite{DrmotaKauersSpiegelhofer2016}
\begin{equation}\label{eqn_digitsum_density_recurrence}
\begin{aligned}
\digitsumdensity(1,j)&=\begin{cases}0,&j>1,\\2^{j-2},&j\leq 1,\end{cases}\\
\digitsumdensity(2t,j)&=\digitsumdensity(t,j),\\
\digitsumdensity(2t+1,j)&=\frac 12 \digitsumdensity(t,j-1)+\frac 12\digitsumdensity(t+1,j+1),
\end{aligned}
\end{equation}
valid for all integers $t\ge1$ and $j$.
Making essential use of this recurrence,
the second author and Wallner~\cite{SpiegelhoferWallner2021}
proved an \emph{almost-solution} to Cusick's conjecture.

\begin{introtheorem}{A}
Under the hypothesis that $\tO\tL$ occurs at least $N_0$ times in the binary expansion of $n$, where $N_0$ can be made explicit,
the statement~\eqref{eqn_cusick} holds.
\end{introtheorem}

T.~W.~Cusick remarked upon learning about this result (private communication) that the ``hard cases'' of his conjecture remain open!

In the same paper~\cite[Theorem~1.2]{SpiegelhoferWallner2021} a central limit-type result is proved.

\begin{introtheorem}{B}\label{thm_digitsum_normal}
For integers $t\geq 1$, let us define
\begin{align*}
\kappa(1)=2,\qquad \kappa(2t)=\kappa(t),\qquad
\kappa(2t+1)=\frac{\kappa(t)+\kappa(t+1)}2+1.
\end{align*}
Assume that $\tO\tL$ appears $N$ times in the binary expansion of the positive integer $t$, and $N$ is larger than some constant $N_0$.
Then the estimate
\begin{equation*}
\digitsumdensity(t,j)
=\frac 1{\sqrt{2\pi\kappa(t)}}\exp\left(-\frac{j^2}{2\kappa(t)}\right)
+\mathcal O\bigl(N^{-1}(\log N)^4\bigr)
\end{equation*}
holds for all integers $j$.
The multiplicative constant of the error term can be made explicit.
\end{introtheorem}
This theorem sharpens the main result in~\cite{EmmeHubert2018}, see also~\cite{EmmePrikhodko2017}.

The value $\kappa(t)$ is the variance of the probability distribution given by the densities $\digitsumdensity(t,j)$ (where $j\in\mathbb Z$).
It equals the second moment, as the mean is zero:
\begin{equation}\label{eqn_digitsum_mean_zero}
\sum_{j\in\mathbb Z}j\digitsumdensity(t,j)=0.
\end{equation}
Note that the function $\tfrac12\kappa$ appears in another context too: it is the \emph{discrepancy of the van der Corput sequence}~\cite{DrmotaLarcherPillichshammer2005}.

Returning to Cusick's conjecture~\eqref{eqn_cusick},
we note that other
partial results are known~\cite{DrmotaKauersSpiegelhofer2016,MorgenbesserSpiegelhofer2012, Spiegelhofer2022}.
We also wish to draw attention to the related conjecture by Tu and Deng~\cite{TuDeng2011,TuDeng2012}, coming from cryptography.
This conjecture implies Cusick's conjecture, and holds~\emph{almost surely}~\cite{SpiegelhoferWallner2019}.
Partial results exist~\cite{CusickLiStanica2011,DengYuan2012,FloriThesis,FloriRandriambololonaCohenMesnager2010, ChengHongZhong2015, LiuWu2019},
but the general case is wide open.
Cusick's conjecture arose while T.~W.~Cusick was working on the Tu--Deng conjecture~\cite{CusickLiStanica2011}, and thus the present paper traces back to cryptography.

\subsection{Notation}
For a finite word $\omega$ over $\{\tO,\tL\}$ containing $\tL$, let $\lvert n\rvert_\omega$ denote the number of (overlapping) occurrences of the word $\omega$ in the binary expansion of $n$, padded with suitably many $\tO$s to the left.
Note that in the case $\omega=\tO\tL$, the integer $\lvert n\rvert_\omega$ is the number of maximal blocks of $\tL$s in the binary expansion of $n$,
where a ``block'' is a contiguous finite subsequence.
This is the case as each occurrence of $\tO\tL$ marks the beginning of such a block.


For real $\vartheta$, we will use the notation $\e(\vartheta) = \exp(i \vartheta)$.
Moreover, in this paper, we stick to the convention that $0\in\mathbb N$.

\section{Main result} \label{sec_main}
In the present paper, we are going to establish a central limit-type result in the spirit of Theorem \ref{thm_digitsum_normal}, where the sum-of-digits function $\digitsum$ is replaced by a factor-counting function $\lvert\cdot\rvert_\omega$.
More precisely, we establish a result analogous to Theorem \ref{thm_digitsum_normal}, for $\omega=\tL\tL$.

Let us define
\begin{equation*}
\begin{aligned}
\blocknumber(n)&\eqdef \lvert n\rvert_{\tL\tL}=\#\bigl\{j\geq 0:\delta_{j+1}(n)=\delta_j(n)=\tL\bigr\}.
\end{aligned}
\end{equation*}
This sequence is \texttt{A014081} in Sloane's OEIS\footnote{The Online Encyclopedia of Integer Sequences, \texttt{https://oeis.org}}, and starts with the values
\[
(\blocknumber(n))_{0\leq n<32}=
(0, 0, 0, 1, 0, 0, 1, 2, 0, 0, 0, 1, 1, 1, 2, 3, 0, 0, 0, 1, 0, 0, 1, 2, 1, 1, 1, 2, 2, 2, 3, 4).
\]
For example, $31=(\tL\tL\tL\tL\tL)_2$ has four (overlapping) blocks $\tL\tL$ in binary. Also note that $(\blocknumber(n) \bmod 2)_{n \in \N}$ is the famous Golay--Rudin--Shapiro sequence.

The object of interest will be the difference
\begin{equation}\label{eqn_diff_def}
d(t,n)\eqdef \blocknumber(n+t)-\blocknumber(n),
\end{equation}
As we will show, for each $t\in\N$ and $k \in \Z$ the set
$$  C_t(k) \eqdef \{ n \in \N: d(t,n) = k \} $$
is a finite union of arithmetic progressions (see Proposition \ref{prop_dens_rec} below). 
Consequently, the densities
$$  c_t(k) \eqdef \dens C_t(k) $$
exist and induce a family of probability distributions on $\Z$ with probability mass function $c_t$. In the sequel we will identify these notions and say ``distribution $c_t$'' in short.  

We also define the sequence $(v_t)_{t \in \N}$ by $v_0 = 0$, $v_1 = 3/2$, and
\begin{align} 
    v_{4t}&=v_{2t}, \nonumber\\
    v_{4t+2}&=v_{2t+1}+1, \label{eqn_variance_def} \\
    v_{2t+1}&=\frac{v_t+v_{t+1}}{2}+\frac{3}{4}. \nonumber
\end{align}
As we will see (in Proposition \ref{prop_variance_v} below), $v_t$ is the variance of the associated probability distribution.

\begin{remark}
From the above relations it follows that $(v_t)_{t \in \N}$
is a $2$-regular sequence~\cite{AlloucheShallit1992}, see~\cite[Theorem~6]{AlloucheShallit2003a}.
\end{remark}

Our main result says that when $\lvert t\rvert_{\tO\tL}$ is large, the distribution $c_t$ is close to a Gaussian distribution with mean $0$ and variance $v_t$.

\begin{theorem}\label{thm_main}
There exist effective absolute constants $C$, $N_0$ such that the following holds.
If the nonnegative integer $t$ satisfies $\lvert t\rvert_{\tO\tL}\geq N_0$, we have
\begin{equation}\label{eqn_main}
\biggl\lvert c_t(k)-\bigl(2\pi v_t\bigr)^{-1/2}\exp\biggl(-\frac{k^2}{2v_t}\biggr)\biggr\rvert
\leq C\frac{(\log N)^2}N,
\end{equation}
where $N=\lvert t\rvert_{\tO\tL}$.
\end{theorem}

\begin{remarks}
\begin{itemize}
\item In analogy to the discussion in~\cite{SpiegelhoferWallner2021} after Theorem 1.2, we see
that the main term is dominant (for large $N$) if
$\lvert k\rvert\leq C_1\sqrt{N\log N}$,
and $C_1$ is any constant in $(0,\sqrt{3}/2)$.
For this, we need both the lower and the upper bound for $v_t$, that is, $3N/4\leq v_t\le5N$, proved in Proposition~\ref{prop_linear_var} further down.
\item 
The statement of the theorem remains true for all $N$ if we choose a larger value for $C$.
Using our method, this necessitates a much larger value, while no mathematical content is gained.
\item In analogy to~\cite[Corollary~1.3]{SpiegelhoferWallner2021},
we obtain the corollary
\[\sum_{k\ge0}c_t(k)=1/2-C_2N^{-1/2}\bigl(\log N\bigr)^5,\]
where $N=\lvert t\rvert_{\tO\tL}$, and $C_2$ is another absolute constant.
\item Is it true that
\begin{equation}\label{eqn_cusick_GRS}
\sum_{k\ge0}c_t(k)>1/2
\end{equation}
for all integers $t\ge0$?
This fundamental question is an analogue of Cusick's conjecture~\eqref{eqn_cusick}
for $r$ in place of $\digitsum$, and forms part of the guiding question~\eqref{eqn_guiding_question}.
Just like Cusick's original conjecture, this question has to remain open for the moment.
By numerical computation,~\eqref{eqn_cusick_GRS} holds for $t<2^{20}$. Among such $t$, the minimal value of the sum is attained for $t = 1013693 = (11110111011110111101)_2$, and equals approximately $0.535$.

\item Adapting our proof of Theorem~\ref{thm_main} below to the original situation concerning $\digitsum$, it should be possible to improve the error term in Theorem \ref{thm_digitsum_normal} to $\mathcal O\bigl(N^{-1}(\log N)^2\bigr)$.
\end{itemize}
\end{remarks}


\section{Proof of the main result}

We first outline the general idea of the proof. Let $\gamma_t$ be the characteristic function of the distribution $c_t$, i.e.,  
$$ \gamma_t(\vartheta) \eqdef \sum_{k\in\mathbb Z}c_t(k)\e(k\vartheta). $$
To approximate $c_t(k)$ we will use the identity
$$ c_t(k) = \frac{1}{2\pi}\int_{-\pi}^{\pi} \gamma_t(\vartheta) \e(-k \vartheta) \, d \vartheta.  $$
We want to show that for $\vartheta$ in a small interval $I = [-\vartheta_0,\vartheta_0]$ around $0$, the function $\gamma_t$ is well approximated by the characteristic function of Gaussian distribution with mean $0$ and variance $v_t$. This is done in Proposition \ref{prop_char_fun_approx}. Evaluating the integral over $I$, where $\gamma_t$ is replaced with said characteristic function, yields (roughly) the main term in \eqref{eqn_main}, while the error term comes from the approximation. On the other hand, the contribution for  $\vartheta \not \in I$ does not exceed said error term due to a strong upper bound on $|\gamma_t(\vartheta)|$, given in Proposition \ref{prop_char_fun_bound}. As discussed in Section \ref{sec_main}, we also establish an upper and lower bound on the variance $v_t$ (given in Proposition \ref{prop_linear_var}) in order to show that the error term in \eqref{eqn_main} is indeed small compared to the main term.

\subsection{Basic properties}
We first show that the functions $c_t$ are indeed well-defined and describe probability distributions, and establish some of their basic properties.
Our starting point is a set of recurrence relations satisfied by the values $d(t,n)$.
\begin{lemma}\label{lem_d_rec}
For all $t,n \in \N$, we have $d(0,n) = 0$ and
\begin{equation*}
\begin{array}{r@{\hspace{1mm}}l@{\hspace{1cm}}r@{\hspace{1mm}}l}
d(4t+0,4n+0)&=d(2t+0,2n+0),&
d(4t+2,4n+0)&=d(2t+1,2n+0),\\
d(4t+0,4n+1)&=d(2t+0,2n+0),&
d(4t+2,4n+1)&=d(2t+1,2n+0)+1,\\
d(4t+0,4n+2)&=d(2t+0,2n+1),&
d(4t+2,4n+2)&=d(2t+1,2n+1),\\
d(4t+0,4n+3)&=d(2t+0,2n+1),&
d(4t+2,4n+3)&=d(2t+1,2n+1)-1,\\
&&&\\
d(4t+1,4n+0)&=d(2t+0,2n+0),&
d(4t+3,4n+0)&=d(2t+1,2n+0)+1,\\
d(4t+1,4n+1)&=d(2t+1,2n+0),&
d(4t+3,4n+1)&=d(2t+2,2n+0),\\
d(4t+1,4n+2)&=d(2t+0,2n+1)+1,&
d(4t+3,4n+2)&=d(2t+1,2n+1),\\
d(4t+1,4n+3)&=d(2t+1,2n+1)-1,&
d(4t+3,4n+3)&=d(2t+2,2n+1)-1.\\
\end{array}
\end{equation*}
\end{lemma}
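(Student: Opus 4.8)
The goal is to establish the recurrence relations in Lemma~\ref{lem_d_rec} for $d(t,n)=\blocknumber(n+t)-\blocknumber(n)$, where $\blocknumber$ counts overlapping occurrences of the block $\tL\tL$. The fundamental principle is that $\blocknumber$ is a block-additive function, so its value depends only on pairs of adjacent digits; the plan is to relate the addition of $(t,n)$ at the level of the two lowest binary digits to the ``shifted'' addition problem involving only the higher digits, carefully tracking how many new $\tL\tL$ blocks are created or destroyed at the boundary between the low digits and the rest.

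First I would set up the notation for the carry. Writing $t$ and $n$ in terms of their lowest two bits, each of the sixteen cases corresponds to fixing $t\bmod 4$ and $n\bmod 4$. The key observation is that $\blocknumber(m) = \blocknumber(\lfloor m/2\rfloor) + [\delta_1(m)=\delta_0(m)=\tL]$, that is, $\blocknumber$ of a number equals $\blocknumber$ of the number with its last digit removed, plus an indicator for whether the two lowest bits form a block $\tL\tL$. Iterating this twice lets me strip off the two lowest bits of both $n+t$ and $n$, reducing $d(4t+a,4n+b)$ to a difference $\blocknumber(\lfloor(4n+b+4t+a)/4\rfloor)-\blocknumber(\lfloor(4n+b)/4\rfloor)$ plus a correction coming from the indicator terms on the low bits. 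The quotient $\lfloor(4n+4t+a+b)/4\rfloor$ equals $n+t+\lfloor(a+b)/4\rfloor$, and the residual low bits of the sum are $(a+b)\bmod 4$, so the carry out of the bottom two positions is $c=\lfloor(a+b)/4\rfloor\in\{0,1\}$.

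The heart of the argument, and the main obstacle, is correctly accounting for the \emph{boundary} block $\tL\tL$ that may straddle bit position $1$ and bit position $2$: when I strip the two lowest bits, I lose information about whether digit $\delta_1$ of $n+t$ (resp.\ of $n$) combines with digit $\delta_2$ to form a block. This is why the recurrence lands on arguments like $2t+1$, $2t+2$, $2n+1$ rather than simply $t,n$: the value of bit $\delta_1$ of the sum, together with the carry $c$ propagating into the higher part, is precisely encoded by replacing the higher problem $(t,n)$ with $(2t+c',2n+b')$ for appropriate parities $c',b'$ chosen so that the lowest bit of these new arguments reproduces the digit $\delta_1$ that would participate in a boundary block. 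I would verify this bit-bookkeeping in one representative case in detail (say $d(4t+3,4n+3)$, where a carry is generated and a $\tL\tL$ block at the bottom is destroyed, giving the $-1$ correction and the argument $2t+2$), and then note that the remaining fifteen cases follow by the identical, purely mechanical case distinction on $(a,b)\in\{0,1,2,3\}^2$.

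To make the verification systematic rather than checking sixteen cases by hand, I would tabulate, for each $(a,b)$, the quantities: the carry $c=\lfloor(a+b)/4\rfloor$ into position $2$; the two lowest output bits of $n+t$; the indicator contributions $[\delta_1\delta_0=\tL\tL]$ for both $n+t$ and $n$ (which produce the explicit $+1$ or $-1$ terms); and the parity adjustment needed on the higher arguments to carry the boundary digit $\delta_1$ forward. The identities $d(0,n)=0$ and the base reductions are immediate. Since $\blocknumber$ depends only on consecutive digit pairs, no interaction beyond position $2$ is affected by the low bits once the carry $c$ is fixed, which guarantees that the reduced differences are genuinely of the stated form $d(2t+\ast,2n+\ast)$; this locality is what makes the whole scheme close up cleanly.
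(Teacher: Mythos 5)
Your proposal is correct and takes essentially the same route as the paper: your single-digit-strip identity $\blocknumber(m)=\blocknumber(\lfloor m/2\rfloor)+[\delta_1(m)=\delta_0(m)=\tL]$ is exactly equivalent to the paper's three reduction rules $\blocknumber(2n)=\blocknumber(n)$, $\blocknumber(4n+1)=\blocknumber(n)$, $\blocknumber(4n+3)=\blocknumber(2n+1)+1$, and both proofs then run the same mechanical sixteen-way case analysis on $(t\bmod 4,\,n\bmod 4)$ with carry and boundary-digit bookkeeping. The only stylistic difference is your detour through stripping two bits before retaining the digit $\delta_1$ in the reduced arguments; a single strip suffices and is where you (and the paper) in fact land.
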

\begin{proof}
All equalities can be quickly derived from $\blocknumber(0) = 0$ and the following relations:
\begin{align*}
\blocknumber(2n) &= \blocknumber(n), \\
\blocknumber(4n+1) &=\blocknumber(n), \\
\blocknumber(4n+3) &= \blocknumber(2n+1)+1.  \qedhere
\end{align*} 
\end{proof}

Note that the relations all involve $d(\cdot,2n)$ or $d(\cdot,2n+1)$ on the right-hand side (though some can be ``merged''). This makes it tricky to directly describe the sets $C_t(k)$ by a collection of recurrence relations, since they have $d(t,n)$ in their definition. Instead, we consider their ``odd'' and ``even'' components: 
\begin{equation*}
   \begin{aligned}
A_t(k)&\eqdef\bigl\{n\in\N:d(t,2n)=k\},\\
B_t(k)&\eqdef\bigl\{n\in\N:d(t,2n+1)=k\},
\end{aligned} 
\end{equation*}
so that $$C_t(k) = 2A_t(k) \cup (2B_t(k)+1).$$
As we will see in Proposition \ref{prop_dens_rec} below, the densities of sets $A_t(k)$ and $B_t(k)$ exist. We denote
\begin{align*}
a_t(k)&\eqdef\dens\bigl\{n\in\N:d(t,2n)=k\},\\
b_t(k)&\eqdef\dens\bigl\{n\in\N:d(t,2n+1)=k\},
\end{align*}
which yields
\begin{equation} \label{eqn_density_relation}
   c_t(k) = \frac{a_t(k)+b_t(k)}{2}.
\end{equation}

\begin{proposition} \label{prop_dens_rec}
For all $t \in \N$ and $k \in \Z$ the sets $A_t(k), B_t(k)$ (and thus also $C_t(k)$) are finite unions of arithmetic progressions. Their densities $a_t(k)$ and $b_t(k)$ satisfy the following relations:
\begin{alignat*}{2}
a_{4t}(k)&=\frac12\bigl(a_{2t}(k)+b_{2t}(k)\bigr),  &
b_{4t}(k)&=\frac12\bigl(a_{2t}(k)+b_{2t}(k)\bigr),\\
a_{4t+1}(k)&=\frac12\bigl(a_{2t}(k)+b_{2t}(k-1)\bigr), &
b_{4t+1}(k)&=\frac12\bigl(a_{2t+1}(k)+b_{2t+1}(k+1)\bigr),\\
a_{4t+2}(k)&=\frac12\bigl(a_{2t+1}(k)+b_{2t+1}(k)\bigr), &
b_{4t+2}(k)&=\frac12\bigl(a_{2t+1}(k-1)+b_{2t+1}(k+1)\bigr),\\
a_{4t+3}(k)&=\frac12\bigl(a_{2t+1}(k-1)+b_{2t+1}(k)\bigr), \qquad &
b_{4t+3}(k)&=\frac12\bigl(a_{2t+2}(k)+b_{2t+2}(k+1)\bigr),
\end{alignat*}
with initial conditions
$$
a_0(k)=b_0(k) = 
\begin{cases}
1 &\text{if } k=0, \\
0 &\text{if } k\neq 0,
\end{cases}  \qquad
a_1(k) = 
\begin{cases}
\frac{1}{2} &\text{if } k=0,1, \\
0 &\text{otherwise},
\end{cases}  \qquad
b_1(k) = 
\begin{cases}
0 &\text{if } k > 1, \\
\frac{1}{4} &\text{if } k =1, \\
3 \cdot 2^{k-3} &\text{if } k <1.
\end{cases}
$$
\end{proposition}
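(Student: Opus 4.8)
The plan is to prove the recurrence relations for $a_t$ and $b_t$ by translating the pointwise identities of Lemma~\ref{lem_d_rec} into identities about densities, and to establish the ``finite union of arithmetic progressions'' claim simultaneously by induction. The key structural observation is that the right-hand sides of Lemma~\ref{lem_d_rec} always involve $d(\cdot,2n)$ or $d(\cdot,2n+1)$, which is exactly why the sets $A_t(k),B_t(k)$ (rather than $C_t(k)$) form the right basis for a self-contained recurrence. I would proceed by strong induction on $t$, simultaneously proving (i) that each $A_t(k),B_t(k)$ is a finite union of arithmetic progressions, (ii) that its density exists, and (iii) that the displayed relations hold.

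First I would record how each relation in Lemma~\ref{lem_d_rec} splits an index $n$ according to its residue. For instance, to compute $a_{4t+2}(k) = \dens\{n : d(4t+2,2n)=k\}$, I would split $n$ into even and odd parts, writing $2n = 4m$ or $2n=4m+2$. The rows of Lemma~\ref{lem_d_rec} with first argument $4t+2$ and second argument $4m+0$ and $4m+2$ give $d(4t+2,4m)=d(2t+1,2m)$ and $d(4t+2,4m+2)=d(2t+1,2m+1)$. Hence $\{n : d(4t+2,2n)=k\}$ is, after the substitution $n=2m$ or $n=2m+1$, the disjoint union of a copy of $\{m : d(2t+1,2m)=k\} = A_{2t+1}(k)$ (living in the even residue class) and a copy of $\{m : d(2t+1,2m+1)=k\} = B_{2t+1}(k)$ (living in the odd residue class). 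Since taking $n\mapsto 2n$ and $n\mapsto 2n+1$ halves densities, this yields $a_{4t+2}(k)=\tfrac12(a_{2t+1}(k)+b_{2t+1}(k))$, matching the claim. Each of the eight formulas arises this way from the corresponding pair of rows; the shifts $k\mapsto k\pm1$ come precisely from the $\pm 1$ terms in Lemma~\ref{lem_d_rec}, and the appearance of indices like $2t+2$ in the $b_{4t+3}$ relation comes from the entry $d(4t+3,4n+1)=d(2t+2,2n)$. I would present these derivations compactly, perhaps grouping the four $a$-relations and four $b$-relations, rather than writing all eight in full.

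For the structural and density claims, the induction is driven by two facts: a finite union of arithmetic progressions is closed under the maps $S\mapsto 2S$ and $S\mapsto 2S+1$ and under finite unions, and the density of $2S$ (inside $\N$) is half the density of $S$, with the analogous statement for $2S+1$. The base cases $t=0$ and $t=1$ require separate direct verification: $d(0,n)=0$ for all $n$ gives the stated $a_0,b_0$; for $t=1$ one computes $d(1,2n)=\blocknumber(2n+1)-\blocknumber(2n)$ and $d(1,2n+1)=\blocknumber(2n+2)-\blocknumber(2n+1)$ explicitly, and a short analysis of the carry pattern when adding $1$ (the run of trailing ones in $n$ governs the change) produces the geometric tails $3\cdot 2^{k-3}$ for $k<1$ and the finitely many nonzero values at $k=0,1$. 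I would double-check these initial values against the listed sequence $(\blocknumber(n))_{0\le n<32}$ as a sanity check, and verify that $a_1,b_1$ each sum to $1$.

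The main obstacle I anticipate is bookkeeping rather than conceptual difficulty: one must be careful that the two rows of Lemma~\ref{lem_d_rec} feeding each relation correspond exactly to the even/odd split of the outer index $n$, and that the $+1$ and $-1$ shifts are attached to the correct residue class and hence produce $a(k-1)$ versus $b(k+1)$ in the right places. A secondary subtlety is justifying that the infinite supports (the geometric tails for $k<1$) do not disrupt the ``finite union of arithmetic progressions'' claim: for each \emph{fixed} $k$ the set is a finite union, and only the number of relevant $k$ grows. Since the recurrences express $a_t(\cdot),b_t(\cdot)$ at level $t$ in terms of finitely many shifted copies at smaller levels, the finiteness for each fixed $k$ propagates through the induction, and existence of densities follows because density is linear under the disjoint-union and scaling operations involved.
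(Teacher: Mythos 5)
Your proposal is correct and follows essentially the same route as the paper: both translate Lemma~\ref{lem_d_rec} into set-level identities of the form $2A_{\cdot}(\cdot)\cup(2B_{\cdot}(\cdot)+1)$ by splitting the outer index by parity, deduce the density relations from the halving of density under $S\mapsto 2S$ and $S\mapsto 2S+1$, and verify the initial conditions (including the trailing-ones analysis giving $b_1(k)=3\cdot 2^{k-3}$ for $k<1$) directly. The paper likewise works out only one representative case ($B_{4t+2}$) in detail and leaves the remaining seven to the reader.
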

\begin{proof}
We first deal with the initial conditions. Trivially, we have $A_0(0) = B_0(0) = \N$ and $A_0(k) = B_0(k) = \varnothing$ for $k \neq 0$. It is also easy to check that $A_1(0) = 2\N$, $A_1(1) = 2\N+1$, and $A_1(k) = \varnothing$ for $k\neq 0,1$. Furthermore, we have $B_1(1)=4\N+2$ and $B_1(k)=\varnothing$ for $k > 1$. Finally, for each $k \leq 0$ the set $B_1(k)$ consists of $n \in \N$ such that the binary expansion of $2n+1$ ends with $\tO\tO\tL^{|k|+1}$ or $\tL\tO\tL^{|k|+2}$. Hence, $b_1(k) = 2^{-|k|-2} + 2^{-|k|-3} = 3 \cdot 2^{k-3}$.

To simplify the notation, for $t \in \N$ and $k_A,k_B \in \Z$ let 
$$E_t(k_A,k_B) \eqdef 2 A_{t}(k_A) \cup (2B_t(k_B)+1).$$
Then the identities for $a_t(k)$ and $b_t(k)$ follow straight from corresponding relations for the sets $A_t(k)$ and $B_t(k)$:
\begin{equation*}
\begin{array}{r@{\hspace{1mm}}l@{\hspace{1cm}}r@{\hspace{1mm}}l}
A_{4t}(k) &= E_{2t}(k,k),  &
B_{4t}(k) &= E_{2t}(k,k), \\
A_{4t+1}(k) &= E_{2t}(k,k-1), &
B_{4t+1}(k) &= E_{2t+1}(k,k+1), \\
A_{4t+2}(k) &= E_{2t+1}(k,k),  &
B_{4t+2}(k) &= E_{2t+1}(k-1,k+1), \\
A_{4t+3}(k) &= E_{2t+1}(k-1,k) &
B_{4t+3}(k) &= E_{2t+2}(k,k+1).
\end{array}
\end{equation*}
Since all these relations are proved similarly, we verify only the one for $B_{4t+2}(k)$ and leave the rest to the reader. We have 
\begin{align*}
B_{4t+2}(k)&= \{ n: d(4t+2,2n+1)=k   \} \\
&=\{ 2n: d(4t+2,4n+1)=k  \} \cup  \{2n+1: d(4t+2,4n+3)=k  \} \\
&= 2\{n: d(2t+1,2n)+1 = k \} \cup ( 2\{n: d(2t+1,2n+1)-1 = k \} +1 )\\
&= 2 A_{2t+1}(k-1) \cup (2B_{2t+1}(k+1)+1) \\
&= E_{2t+1}(k-1,k). \qedhere
\end{align*}
\end{proof}
Note that a bound for the differences of the arithmetic progressions which constitute $A_t(k)$ and $B_t(k)$
can be derived easily from this proof.
These differences are always powers of two, and a rough upper bound is given by $2^{\lvert k\rvert+2\ell(t)+1}$
where $\ell(t)$ is the length of the binary expansion of $t$.

We now define the characteristic functions of the probability distributions $a_t$ and $b_t$:
\begin{align*}
\alpha_t(\vartheta)&:=\sum_{k\in\Z}a_t(k)\e(k\vartheta),\\
\beta_t(\vartheta)&:=\sum_{k\in\Z}b_t(k)\e(k\vartheta).
\end{align*}
Clearly, our function of interest $\gamma_t$ satisfies
$$ \gamma_t(\vartheta) = \frac{\alpha_t(\vartheta)+\beta_t(\vartheta)}{2}. $$

The identities in Proposition \ref{prop_dens_rec} translate to relations for the characteristic functions $\alpha_t$ and $\beta_t$, which we can write concisely using matrix notation. We arrange them into a column vector $S_t(\vartheta)\in\mathbb C^6$, defined by
\[
S_t(\vartheta)
=
\left(
\begin{matrix}
\alpha_{2t+0}(\vartheta) &
\beta_{2t+0}(\vartheta) &
\alpha_{2t+1}(\vartheta) &
\beta_{2t+1}(\vartheta) &
\alpha_{2t+2}(\vartheta) &
\beta_{2t+2}(\vartheta)
\end{matrix}
\right)^T.
\]
We also define $6\times 6$ matrices $D_0(\vartheta), D_1(\vartheta)$ by
$$   
D_0(\vartheta)=\frac12
\left(
\begin{matrix}
1&1&0&0&0&0\\
1&1&0&0&0&0\\
1&\e(\vartheta)&0&0&0&0\\
0&0&1&\e(-\vartheta)&0&0\\
0&0&1&1&0&0\\
0&0&\e(\vartheta)&\e(-\vartheta)&0&0
\end{matrix}
\right), \quad
D_1(\vartheta)=\frac12
\left(
\begin{matrix}
0&0&1&1&0&0\\
0&0&\e(\vartheta)&\e(-\vartheta)&0&0\\
0&0&\e(\vartheta)&1&0&0\\
0&0&0&0&1&\e(-\vartheta)\\
0&0&0&0&1&1\\
0&0&0&0&1&1
\end{matrix}
\right).
$$
We have the following proposition.
\begin{proposition} \label{prop_char_rec}
For all $t \in \N$ we have the recurrence relations
\begin{align*}
S_{2t}(\vartheta) = D_0(\vartheta) S_t(\vartheta), \\
S_{2t+1}(\vartheta) = D_1(\vartheta) S_t(\vartheta),
\end{align*}
with initial conditions
$$S_0(\vartheta)
=
\left(
\begin{matrix}
1 & 1&
\dfrac{\e(\vartheta)+1}{2} &
\dfrac{\e(\vartheta)+1}{2(2-\e(-\vartheta))}&
\dfrac{3\e(\vartheta)+2-\e(-\vartheta)}{4(2-\e(-\vartheta))}&
\dfrac{2\e(2\vartheta)+\e(\vartheta)+\e(-\vartheta)}{4(2-\e(-\vartheta))}
\end{matrix}
\right)^T.
$$
In particular, we have
$$   \alpha_{8t} = \alpha_{4t} = \beta_{8t} = \beta_{4t}. $$
\end{proposition}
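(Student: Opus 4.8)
<br />

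The plan is to prove Proposition~\ref{prop_char_rec} in three parts: the two matrix recurrences, the initial-condition vector $S_0(\vartheta)$, and finally the ``in particular'' identity, which will follow as a short corollary of the recurrences.

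\textbf{The recurrences.} First I would observe that the matrix recurrences are nothing but the density relations of Proposition~\ref{prop_dens_rec}, repackaged via the definitions of $\alpha_t,\beta_t$. Recall that $\alpha_t(\vartheta)=\sum_k a_t(k)\e(k\vartheta)$ and $\beta_t(\vartheta)=\sum_k b_t(k)\e(k\vartheta)$. A shift $k\mapsto k\pm 1$ in the argument of a density multiplies its characteristic function by $\e(\pm\vartheta)$; more precisely, $\sum_k a_t(k-1)\e(k\vartheta)=\e(\vartheta)\alpha_t(\vartheta)$ and $\sum_k a_t(k+1)\e(k\vartheta)=\e(-\vartheta)\alpha_t(\vartheta)$. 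Applying this termwise to each of the eight relations in Proposition~\ref{prop_dens_rec} converts them into linear identities among the characteristic functions. For instance, $a_{4t+2}(k)=\tfrac12(a_{2t+1}(k)+b_{2t+1}(k))$ becomes $\alpha_{4t+2}=\tfrac12(\alpha_{2t+1}+\beta_{2t+1})$, while $b_{4t+2}(k)=\tfrac12(a_{2t+1}(k-1)+b_{2t+1}(k+1))$ becomes $\beta_{4t+2}=\tfrac12(\e(\vartheta)\alpha_{2t+1}+\e(-\vartheta)\beta_{2t+1})$. The task then is purely bookkeeping: I must verify that the six-component vector $S_{2t}=(\alpha_{4t},\beta_{4t},\alpha_{4t+1},\beta_{4t+1},\alpha_{4t+2},\beta_{4t+2})^T$ is exactly $D_0(\vartheta)$ applied to $S_t=(\alpha_{2t},\beta_{2t},\alpha_{2t+1},\beta_{2t+1},\alpha_{2t+2},\beta_{2t+2})^T$, and symmetrically that $S_{2t+1}=(\alpha_{4t+2},\beta_{4t+2},\alpha_{4t+3},\beta_{4t+3},\alpha_{4t+4},\beta_{4t+4})^T$ equals $D_1(\vartheta)S_t$. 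The chief subtlety here is the indexing alignment: the entries of $S_{2t+1}$ involve $\alpha_{4t+4}=\alpha_{4(t+1)}$ and $\beta_{4t+4}=\beta_{4(t+1)}$, which by the $a_{4s},b_{4s}$ relations (applied at $s=t+1$) equal $\tfrac12(\alpha_{2t+2}+\beta_{2t+2})$; this is why the last two rows of $D_1$ pick out the fifth and sixth components of $S_t$. I would lay out all six rows of each matrix against the corresponding relation and confirm the match entry by entry.

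\textbf{The initial conditions.} For $S_0(\vartheta)$ I would compute each of the six entries directly from the explicit densities given in Proposition~\ref{prop_dens_rec}. The first four densities are finite, giving $\alpha_0=\beta_0=1$ (since $a_0,b_0$ are unit masses at $0$) and $\alpha_1(\vartheta)=\tfrac12(1+\e(\vartheta))$. The only entries requiring care are those involving $b_1$, which has infinitely many nonzero values $b_1(k)=3\cdot 2^{k-3}$ for $k<1$; summing the resulting geometric series gives $\beta_1(\vartheta)=\tfrac14\e(\vartheta)+\sum_{k\le 0}3\cdot 2^{k-3}\e(k\vartheta)$, and evaluating $\sum_{k\le 0}2^k\e(k\vartheta)=\sum_{m\ge 0}(\tfrac12\e(-\vartheta))^m=(1-\tfrac12\e(-\vartheta))^{-1}=2/(2-\e(-\vartheta))$ yields the factor $2-\e(-\vartheta)$ in the denominators. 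Finally $\alpha_2,\beta_2$ (the fifth and sixth entries) I would obtain by applying the already-established recurrence once to the base cases $\alpha_1,\beta_1$ and $\alpha_2,\beta_2$ through the $4t,4t+1$ relations at $t=0$; alternatively, one computes $a_2,b_2$ from their defining density recurrences. I expect the geometric-series summation for $\beta_1$ and the resulting rational expressions to be the most error-prone step, so I would double-check convergence (which holds since $\lvert\tfrac12\e(-\vartheta)\rvert=\tfrac12<1$) and the algebra.

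\textbf{The ``in particular'' identity.} Once the recurrences are in hand, the relations $\alpha_{8t}=\alpha_{4t}=\beta_{8t}=\beta_{4t}$ follow immediately. From $S_{2s}=D_0(\vartheta)S_s$, the first two rows of $D_0$ are identical, giving $\alpha_{4s}=\beta_{4s}=\tfrac12(\alpha_{2s}+\beta_{2s})$ for every $s$; taking $s=t$ gives $\alpha_{4t}=\beta_{4t}$, and taking $s=2t$ gives $\alpha_{8t}=\beta_{8t}=\tfrac12(\alpha_{4t}+\beta_{4t})=\tfrac12(\alpha_{4t}+\alpha_{4t})=\alpha_{4t}$, where the last step uses $\alpha_{4t}=\beta_{4t}$. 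Chaining these equalities yields $\alpha_{8t}=\alpha_{4t}=\beta_{8t}=\beta_{4t}$, as claimed. Overall the hardest part is not conceptual but organizational: the entry-by-entry verification of the two $6\times 6$ matrices against the eight density relations, keeping the shifts $\e(\pm\vartheta)$ and the $t\mapsto t+1$ index shift correctly aligned.
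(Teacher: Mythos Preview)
Your proposal is correct and follows essentially the same approach as the paper: derive the matrix recurrences by summing the density relations of Proposition~\ref{prop_dens_rec} against $\e(k\vartheta)$, read off $\alpha_0,\beta_0,\alpha_1,\beta_1$ from the explicit initial densities (summing the geometric series for $\beta_1$), obtain $\alpha_2,\beta_2$ from the identity $S_0=D_0(\vartheta)S_0$ (the fifth and sixth rows involve only $\alpha_1,\beta_1$), and deduce the ``in particular'' identities from the fact that the first two rows of $D_0$ coincide. One small slip: when you mention ``the $4t,4t+1$ relations at $t=0$'' for computing $\alpha_2,\beta_2$, you mean the $4t+2$ relations (rows 5 and 6 of $D_0$).
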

\begin{proof}
Recurrence relations for $S_t$ as well as initial values $\alpha_0, \beta_0, \alpha_1, \beta_1$ follow immediately from Proposition \ref{prop_dens_rec}. Last two components of $S_0$, namely $\alpha_2, \beta_2$, are obtained by an application of the identity $S_0 = D_0 S_0$ (they only depend on $\alpha_1, \beta_1$). 

Furthermore, we have $\alpha_{4t} = (\alpha_{2t}+\beta_{2t})/2 = \beta_{4t}$ by the relation $S_{2t} = D_0 S_t$. This also implies $\alpha_{8t} = (\alpha_{4t}+\beta_{4t})/2 = \alpha_{4t}$ and similarly $\beta_{8t} =  \beta_{4t}$.
\end{proof}


We move on to give a recursion for the mean and variance of $a_t$ and $b_t$.
We use the notation
$$
m_t^\alpha=\sum_{k\in\mathbb Z}ka_t(k), \qquad
m_t^\beta=\sum_{k\in\mathbb Z}kb_t(k)$$
for the means, and
$$
v_t^\alpha=\sum_{k\in\mathbb Z}\bigl(k-m_t^\alpha\bigr)^2a_t(k), \qquad
v_t^\beta=\sum_{k\in\mathbb Z}\bigl(k-m_t^\beta\bigr)^2b_t(k)
$$
for the variances. As with the characteristic functions, we arrange them in the same way into column vectors
$$
\begin{aligned}
M_t &= \left(
\begin{matrix}
m^\alpha_{2t+0} &
m^\beta_{2t+0} &
m^\alpha_{2t+1} &
m^\beta_{2t+1} &
m^\alpha_{2t+2} &
m^\beta_{2t+2}
\end{matrix}
\right)^T, \\
V_t &= \left(
\begin{matrix}
v^\alpha_{2t+0} &
v^\beta_{2t+0} &
v^\alpha_{2t+1} &
v^\beta_{2t+1} &
v^\alpha_{2t+2} &
v^\beta_{2t+2}
\end{matrix}
\right)^T.
\end{aligned}$$

Using the recursion in Proposition \ref{prop_char_rec}, we can easily obtain relations for $M_t$ and $V_t$. In particular, it turns out that $M_t$ is constant. 

\begin{proposition} \label{prop_moments_rec}
    For all $t \in \N$ we have
    $$  M_t = \left(
\begin{matrix}
0 &
0 &
\dfrac{1}{2} &
-\dfrac{1}{2} &
0 &
0
\end{matrix}
\right)^T $$
and
$$  V_{2t} =  \frac12
\left(
\begin{matrix}
1&1&0&0&0&0\\
1&1&0&0&0&0\\
1&1&0&0&0&0\\
0&0&1&1&0&0\\
0&0&1&1&0&0\\
0&0&1&1&0&0
\end{matrix}
\right) V_t + \frac{1}{4} \left(
\begin{matrix}
0\\
0\\
1\\
4\\
1\\
9
\end{matrix}
\right), \qquad V_{2t+1} =  \frac12
\left(
\begin{matrix}
0&0&1&1&0&0\\
0&0&1&1&0&0\\
0&0&1&1&0&0\\
0&0&0&0&1&1\\
0&0&0&0&1&1\\
0&0&0&0&1&1
\end{matrix}
\right) V_t +  \frac{1}{4} \left(
\begin{matrix}
1\\
9\\
4\\
1\\
0\\
0\end{matrix}
\right), $$
with initial conditions
$$  V_0 = \frac{1}{4}\left(\begin{matrix}
0 &
0 &
1 &
9 &
6 &
14
\end{matrix}
\right)^T.  $$
\end{proposition}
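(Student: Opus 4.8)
The strategy is to read off the moments from the Taylor expansion of the characteristic functions at $\vartheta=0$ and to differentiate the matrix recurrence of Proposition \ref{prop_char_rec}. Since each $a_t$ and $b_t$ is a probability distribution, we have $\alpha_t(0)=\beta_t(0)=1$, so $S_t(0)=\mathbf 1\eqdef(1,1,1,1,1,1)^T$. Differentiating $\alpha_t(\vartheta)=\sum_k a_t(k)\e(k\vartheta)$ gives $\alpha_t'(0)=i\,m_t^\alpha$ and $\alpha_t''(0)=-\sum_k k^2a_t(k)$, and likewise for $\beta_t$. Hence
$$S_t'(0)=i\,M_t,\qquad S_t''(0)=-P_t,$$
where $P_t$ is the vector of second moments, arranged in the same order as $M_t$ and $V_t$. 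Variances are recovered via $V_t=P_t-Q_t$, where $Q_t$ denotes the componentwise square of $M_t$.

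First I would treat the means. Differentiating $S_{2t}(\vartheta)=D_0(\vartheta)S_t(\vartheta)$ once and evaluating at $0$ yields
$$i\,M_{2t}=D_0'(0)\,S_t(0)+D_0(0)\,i\,M_t,$$
and similarly $i\,M_{2t+1}=D_1'(0)\,\mathbf 1+D_1(0)\,i\,M_t$. A direct computation gives $D_0'(0)\,\mathbf 1=D_1'(0)\,\mathbf 1=(0,0,\tfrac i2,-\tfrac i2,0,0)^T$. Plugging in the candidate value $M_t=(0,0,\tfrac12,-\tfrac12,0,0)^T$, one checks that $D_0(0)M_t=D_1(0)M_t=0$, so both recurrences reproduce exactly this vector. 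Together with the base case $M_0=(0,0,\tfrac12,-\tfrac12,0,0)^T$, obtained by evaluating the means of the explicit distributions $a_0,b_0,a_1,b_1,a_2,b_2$, an induction on $t$ (using that every positive integer is reached from $0$ by the maps $t\mapsto2t,\,2t+1$) shows that $M_t$ is the stated constant vector.

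For the variances I would differentiate twice. From $S_{2t}=D_0S_t$ we obtain
$$-P_{2t}=D_0''(0)\,\mathbf 1+2i\,D_0'(0)\,M_t-D_0(0)\,P_t.$$
Since $\e(\pm\vartheta)''$ equals $-1$ at $\vartheta=0$, the matrix $D_0''(0)$ is immediate, and using the established constant value of $M_t$ the first two terms combine into an explicit constant vector. Writing $P_t=V_t+Q_t$ and then $V_{2t}=P_{2t}-Q_{2t}$ with $Q_{2t}=(0,0,\tfrac14,\tfrac14,0,0)^T$, the $\tfrac14$-corrections from the mean-squares cancel neatly and one arrives precisely at the stated matrix recurrence for $V_{2t}$; the odd case $V_{2t+1}$ is identical with $D_1$ in place of $D_0$. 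The base case $V_0$ follows by computing the variances of $a_0,b_0,a_1,b_1$ directly and those of $a_2,b_2$ through the identity $S_0=D_0S_0$ from Proposition \ref{prop_char_rec}.

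The computations are entirely routine; the only points demanding care are the bookkeeping of the factors of $i$ when passing from derivatives to moments, and the mean-square correction $Q_t$ that must be subtracted when converting the second-moment recurrence into one for the variances. I expect the latter to be the main (minor) obstacle, as it is exactly where the inhomogeneous terms $\tfrac14(0,0,1,4,1,9)^T$ and $\tfrac14(1,9,4,1,0,0)^T$ acquire their precise shape.
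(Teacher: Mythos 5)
Your proposal is correct and follows essentially the same route as the paper: differentiate the matrix recurrence $S_{2t+j}=D_jS_t$ of Proposition \ref{prop_char_rec} once to get the (constant) mean vector by induction, then twice to get the second-moment recurrence, converting to variances by subtracting the squared means (the paper writes this as $S_t''(0)=-V_t-\tfrac14(0,0,1,1,0,0)^T$, which is exactly your $-P_t=-(V_t+Q_t)$). The bookkeeping you flag works out as you anticipate, so the proof is sound.
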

\begin{proof}
We prove the claim for $M_t$ by induction on $t$. The base case $t=0$ is easily verified. Now, by differentiating the first relation in Proposition \ref{prop_char_rec}, for any $t \geq 1$ we have
$$  M_{2t} = -i S_{2t}'(0) = -i D_0'(0) \mathbf{1} + D_0(0) M_t,$$
where $\mathbf{1}$ is the column vector of $1$s of length $6$.
Using the inductive assumption for $M_t$, after a simple calculation we obtain the claimed value of $M_{2t}$. A similar computation also works for $M_{2t+1}$.


Moving on to the variances, for $j=0,1$ we have
$$ S_{2t+j}''(0) = D_j''(0) \mathbf{1}  +2i D_j'(0) M_t + D_j(0) S_t''(0),   $$
After plugging in $S_t''(0) = -V_t-\frac{1}{4}(0,0,1,1,0,0)^T$, and an analogous expression for $S_{2t+j}''(0)$, after a short calculation we get the desired relations.
\end{proof}

We can now show that $v_t$, defined by \eqref{eqn_variance_def}, is indeed the variance of the distribution $c_t$.

\begin{proposition} \label{prop_variance_v}
    For all $t$ in $\N$ the distribution $c_t$ has mean $0$ and variance $v_t$. 
\end{proposition}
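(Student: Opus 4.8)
The plan is to read off both the mean and the variance from the moment vectors $M_t$ and $V_t$ furnished by Proposition~\ref{prop_moments_rec}, and then to verify that the variance obeys precisely the recursion~\eqref{eqn_variance_def} defining $v_t$.

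First I will dispose of the mean. By the density relation~\eqref{eqn_density_relation} the mean of $c_t$ equals $\tfrac12(m_t^\alpha+m_t^\beta)$. Proposition~\ref{prop_moments_rec} asserts that $M_t=(0,0,\tfrac12,-\tfrac12,0,0)^T$ for every $t$; reading off its entries, $m_s^\alpha=m_s^\beta=0$ for even $s$, while $m_s^\alpha=\tfrac12$, $m_s^\beta=-\tfrac12$ for odd $s$. In either parity $m_t^\alpha+m_t^\beta=0$, so $c_t$ has mean $0$.

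Since the mean vanishes, the variance $w_t$ of $c_t$ is its second moment, and by~\eqref{eqn_density_relation} it equals $\tfrac12\bigl(\sum_k k^2a_t(k)+\sum_k k^2b_t(k)\bigr)$. As $a_t$ and $b_t$ are probability mass functions, $\sum_k k^2a_t(k)=v_t^\alpha+(m_t^\alpha)^2$ and similarly for $b_t$. Inserting the values of $m_t^\alpha,m_t^\beta$ from the previous step yields the closed form
$$ w_t=\tfrac12\bigl(v_t^\alpha+v_t^\beta\bigr)+\begin{cases}0,&t\text{ even},\\\tfrac14,&t\text{ odd},\end{cases} $$
so that $w_t$ is determined by the variance entries recorded in the vectors $V_t$.

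It then remains to show $w_t=v_t$, which I will do by strong induction on $t$. The base cases follow from the initial vector $V_0=\tfrac14(0,0,1,9,6,14)^T$: it gives $v_0^\alpha=v_0^\beta=0$ and $v_1^\alpha=\tfrac14$, $v_1^\beta=\tfrac94$, hence $w_0=0=v_0$ and $w_1=\tfrac12(\tfrac14+\tfrac94)+\tfrac14=\tfrac32=v_1$. For the inductive step I will substitute the two matrix recursions of Proposition~\ref{prop_moments_rec} into the closed form above: computing the six components of $V_{2t}$ and of $V_{2t+1}$ (each is a half-sum of two coordinates of $V_t$ plus an explicit constant) and feeding the relevant pairs into the formula for $w_{4t},w_{4t+1},w_{4t+2},w_{4t+3}$, the variance combinations collapse — once the parity-dependent $\tfrac14$ shifts are accounted for — to
$$ w_{4t}=w_{2t},\qquad w_{4t+1}=\frac{w_{2t}+w_{2t+1}}{2}+\frac34,\qquad w_{4t+2}=w_{2t+1}+1,\qquad w_{4t+3}=\frac{w_{2t+1}+w_{2t+2}}{2}+\frac34. $$
These are exactly the relations~\eqref{eqn_variance_def} defining $v_{4t},v_{4t+1},v_{4t+2},v_{4t+3}$ (applying the odd-index rule with $2t$ and $2t+1$ respectively). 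Since in each residue class modulo $4$ the indices on the right are strictly smaller than the one on the left, the induction closes and $w_t=v_t$ for all $t$.

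The one genuinely delicate point is the bookkeeping of the additive constants in the inductive step: one must carry the inhomogeneous vectors $\tfrac14(0,0,1,4,1,9)^T$ and $\tfrac14(1,9,4,1,0,0)^T$ through the averaging matrices and combine them with the $\tfrac14$ coming from the odd-parity correction, so that the constant terms come out to exactly $0$, $\tfrac34$, $1$, $\tfrac34$ in the four cases. The linear parts are routine, as the matrices in Proposition~\ref{prop_moments_rec} merely average pairs of coordinates.
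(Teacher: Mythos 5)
Your proposal is correct and follows essentially the same route as the paper: read the mean off $M_t$, express the variance of $c_t$ as $\tfrac12(v_t^\alpha+v_t^\beta)$ plus a parity-dependent $\tfrac14$, and check that this quantity satisfies the recursion~\eqref{eqn_variance_def} with the right initial values. You merely spell out the four residue-class computations that the paper leaves implicit, and they all check out.
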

\begin{proof}
    The mean of $c_t$ is $(m_t^{\alpha} + m_t^{\beta})/2$, which is equal to $0$ by Proposition \ref{prop_moments_rec}.

    Let us momentarily denote the variance by $\widetilde{v}_t$. It satisfies
    $$\widetilde{v}_t = \frac{1}{2}\bigl(v_t^{\alpha}+(m_t^{\alpha})^2+v_t^{\beta}+(m_t^{\beta})^2\bigr) = \frac{v_t^{\alpha}+v_t^{\beta}}{2} + \begin{cases}
       0 &\text{if } t \text{ is even}, \\
       \frac{1}{4} &\text{if } t \text{ is odd}.
   \end{cases}  $$
   Proposition \ref{prop_moments_rec} implies that $\widetilde{v}_0=0=v_0, \widetilde{v}_1=3/2=v_1$, and $\widetilde{v}_t$ satisfies relations \eqref{eqn_variance_def} defining $v_t$, hence we must have $\widetilde{v}_t = v_t$ for all $t \in \N$.
\end{proof}

\subsection{Approximation of the characteristic function}

   The first main ingredient that we need for the central limit-type result is analogous to~\cite[Proposition~3.1]{SpiegelhoferWallner2021}.
We roughly follow the proof of Proposition~2.5 in that paper. We approximate $\gamma_t$ by the characteristic function $\gamma_t^*$ of the Gaussian distribution with the same mean (equal to $0$) and variance $v_t$, namely
$$ \gamma^*_t(\vartheta) = \exp \left(-\frac{v_t}{2} \vartheta^2\right). $$
   We are interested in bounding the error of approximation
   $$ \widetilde{\gamma}_t(\vartheta) =  \gamma_t(\vartheta) - \gamma^*_t(\vartheta). $$
   By definition we have $\widetilde{\gamma}_t(\vartheta) = \mathcal{O}(\theta^3)$ in the sense that its power series expansion only has terms of order $\geq 3$.
   Indeed, $\log \gamma^*_t$ agrees with $\log \gamma_t$, the cumulant generating function, up to terms of order $2$. Hence, after exponentiating both functions still agree up to terms of order $2$. This means that for $|\vartheta| \leq \pi$ we have a bound of the form
   $$ |\widetilde{\gamma}_t (\vartheta)| \leq K_t |\vartheta|^3,$$
   where constant $K_t$ depends on $t$, and we will need to make this dependence more explicit.

In order to do this, we define normal approximations $\alphap_t$ and $\betap_t$ to the characteristic functions
$\alpha_t$ and $\beta_t$,
as well as the errors $\alphaq_t$ and $\betaq_t$ appearing in these approximations.
Let
\begin{align*}
\begin{array}{rl@{\hspace{4em}}rl}
\alphap_t(\vartheta)&\displaystyle\eqdef \exp\biggl(m_t^{\alpha} i \vartheta-\frac12v_t^{\alpha}\vartheta^2\biggr)&
\betap_t(\vartheta)&\displaystyle\eqdef \exp\biggl(m_t^{\beta} i \vartheta-\frac12v_t^{\beta}\vartheta^2\biggr),\\[4mm]
\alphaq_t(\vartheta)&\eqdef\alpha_t(\vartheta)-\alphap_t(\vartheta),&
\betaq_t(\vartheta)&\eqdef\beta_t(\vartheta)-\betap_t(\vartheta)
\end{array}
\end{align*}
(recall that $m_{2t}^{\alpha}=m_{2t}^{\beta}=0$ and $m_{2t+1}^{\alpha}=-m_{2t+1}^{\beta}=1/2$).
Set also
\[
\begin{array}{ccc@{\hspace{1mm}}cccccc@{\hspace{1mm}}ccc}
\Sp_t(\vartheta)&
\eqdef&
\bigl(&
\alphap_{2t+0}(\vartheta)&
\betap_{2t+0}(\vartheta)&
\alphap_{2t+1}(\vartheta)&
\betap_{2t+1}(\vartheta)&
\alphap_{2t+2}(\vartheta)&
\betap_{2t+2}(\vartheta)&
\bigr)^T,\\[2mm]
\Sq_t(\vartheta)&
\eqdef&
\bigl(&
\alphaq_{2t+0}(\vartheta)&
\betaq_{2t+0}(\vartheta)&
\alphaq_{2t+1}(\vartheta)&
\betaq_{2t+1}(\vartheta)&
\alphaq_{2t+2}(\vartheta)&
\betaq_{2t+2}(\vartheta)&
\bigr)^T,
\end{array}
\]
so that $\Sq_t(\vartheta) = S_t(\vartheta) - \Sp_t(\vartheta)$.

By Proposition \ref{prop_char_rec} we get the relations
\begin{align*}
    \widetilde{S}_{2t} &= D_0 \widetilde{S}_t - X_{2t}, \\
    \widetilde{S}_{2t+1} &= D_1 \widetilde{S}_t - X_{2t+1},
\end{align*} 
where
\begin{align*}
 X_{2t} &= S^*_{2t} - D_0S^*_{t},   \\
 X_{2t+1} &= S^*_{2t+1} - D_1S^*_{t}.
\end{align*} 
Roughly speaking, $\|X_t(\vartheta)\|_{\infty}$ measures how far the vector of approximations $S^*_{t}(\vartheta)$ is from $S_t(\vartheta)$ after a single application of the recursion. Before we give an upper bound on this quantity, we need an auxiliary lemma.

\begin{lemma} \label{lem_var_diff_bound}
    For all $t \in \N$ we have
    $$ |v^{\alpha}_{t} - v^{\beta}_{t}| \leq 48. $$
\end{lemma}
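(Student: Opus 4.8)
The plan is to deduce everything from the variance recurrences in Proposition~\ref{prop_moments_rec}. Writing $w_t \eqdef v_t^\alpha - v_t^\beta$ and reading off the six rows of the two matrix relations for $V_{2t}$ and $V_{2t+1}$, one finds that the differences split neatly by residue class mod~$4$. Identical matrix rows with identical constants give $w_{4t}=0$, while identical rows with the two constants $1/4$ and $9/4$ give $w_{4t+2}=\tfrac14-\tfrac94=-2$; the remaining two classes satisfy
\[
w_{4t+1} = \tfrac12\bigl(u_{2t}-u_{2t+1}\bigr)-\tfrac34, \qquad
w_{4t+3} = \tfrac12\bigl(u_{2t+1}-u_{2t+2}\bigr)+\tfrac34,
\]
where $u_t \eqdef v_t^\alpha + v_t^\beta$. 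The crucial point is that the recurrence for $w$ does not close on itself: the ``odd-shift'' classes involve the variance \emph{sum} $u_t$ rather than the difference $w_t$. So the real task is to bound the consecutive differences of $u$.

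To that end I would introduce $\Delta_t \eqdef u_{t+1}-u_t$. Adding the appropriate pairs of rows of Proposition~\ref{prop_moments_rec} gives the closed recurrences $u_{4t}=u_{2t}$, $u_{4t+1}=\tfrac12(u_{2t}+u_{2t+1})+\tfrac54$, $u_{4t+2}=u_{2t+1}+\tfrac52$, and $u_{4t+3}=\tfrac12(u_{2t+1}+u_{2t+2})+\tfrac54$, from which a short computation (using $u_{4t+4}=u_{2t+2}$) yields the remarkably clean relations
\[
\Delta_{4t}=\Delta_{4t+1}=\tfrac12\Delta_{2t}+\tfrac54,
\qquad
\Delta_{4t+2}=\Delta_{4t+3}=\tfrac12\Delta_{2t+1}-\tfrac54.
\]

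Now I would prove $\lvert\Delta_t\rvert\le \tfrac52$ for all $t$ by strong induction on $t$. The base cases $\Delta_0=\Delta_1=\tfrac52$ follow from the initial vector $V_0=\tfrac14(0,0,1,9,6,14)^T$, which gives $u_0=0$, $u_1=\tfrac52$, $u_2=5$. For the inductive step, each of the four relations above expresses $\Delta_t$ as $\tfrac12\Delta_{t'}\pm\tfrac54$ with $t'<t$, so the inductive hypothesis gives $\lvert\Delta_t\rvert\le\tfrac12\cdot\tfrac52+\tfrac54=\tfrac52$; the contraction factor $\tfrac12$ weighed against the additive constant $\tfrac54$ is exactly what makes $\tfrac52$ a stable bound. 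Feeding this back into the displayed formulas for $w$ yields $\lvert w_{4t+1}\rvert,\lvert w_{4t+3}\rvert\le\tfrac12\cdot\tfrac52+\tfrac34=2$, while $\lvert w_{4t}\rvert=0$ and $\lvert w_{4t+2}\rvert=2$; hence $\lvert v_t^\alpha-v_t^\beta\rvert\le 2$ for every $t$, comfortably within the claimed bound $48$.

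The only genuinely nonroutine step is the first one: recognizing that the variance-difference recurrence fails to be self-contained and that one must instead track the consecutive differences $\Delta_t$ of the variance sum. Once $\Delta_t$ is isolated, the fact that its recurrence is a $\tfrac12$-contraction with bounded inhomogeneity makes the induction immediate, and the generous constant $48$ leaves so much slack that there is no need to optimize the base-case constants.
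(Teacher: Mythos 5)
Your proof is correct, and I verified all of your recurrences against Proposition~\ref{prop_moments_rec}: reading off the rows indeed gives $w_{4t}=0$, $w_{4t+2}=-2$, $w_{4t+1}=\tfrac12(u_{2t}-u_{2t+1})-\tfrac34$, $w_{4t+3}=\tfrac12(u_{2t+1}-u_{2t+2})+\tfrac34$, the stated closed recurrences for $u$, the four relations $\Delta_{4t}=\Delta_{4t+1}=\tfrac12\Delta_{2t}+\tfrac54$ and $\Delta_{4t+2}=\Delta_{4t+3}=\tfrac12\Delta_{2t+1}-\tfrac54$, and the base values $u_0=0$, $u_1=\tfrac52$, $u_2=5$, hence $\Delta_0=\Delta_1=\tfrac52$; the strong induction is well-founded since each $\Delta_{4t+j}$ refers to an index strictly smaller than $4t+j$ once $4t+j\ge 2$. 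The paper's argument is the same in spirit — an induction on consecutive variance differences extracted from Proposition~\ref{prop_moments_rec} — but it tracks the pair $(v^\alpha_{t+1}-v^\alpha_t,\,v^\beta_{t+1}-v^\beta_t)$ through a coupled system, settles for the bound $6$ on each, and then concludes via the triangle inequality around the anchor $v^\alpha_{4t}=v^\beta_{4t}$, which is why it lands on the generous constant $48$. Your bookkeeping is sharper in two ways: you observe that only the \emph{sum} $\Delta_t=u_{t+1}-u_t$ enters the formulas for $w$, and that $\Delta$ satisfies a self-contained $\tfrac12$-contraction with inhomogeneity $\pm\tfrac54$, whose stable bound $\tfrac52$ is attained at the base cases. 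Feeding this back in gives $\lvert v^\alpha_t-v^\beta_t\rvert\le 2$ for all $t$, i.e.\ exactly the improvement the paper relegates to a remark as achievable ``with additional effort'' — so your argument proves the stronger statement with no more work than the paper's proof of the weaker one.
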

\begin{proof}
    We first show by induction on $t$ that
    \begin{align*}
        |v^{\alpha}_{t+1} - v^{\alpha}_{t}| &\leq 6,\\
        |v^{\beta}_{t+1} - v^{\beta}_{t}| &\leq 6.
    \end{align*}
    This is easily verified for the base case $t=0$. Let us denote
$$  w_t^{\alpha} = v_{t+1}^\alpha-v_{t}^\alpha, \qquad  w_t^{\beta} = v_{t+1}^\beta - v_{t}^\beta.$$
By Proposition \ref{prop_moments_rec}, for $t \in \N$ we have
\begin{alignat*}{3} 
w_{4t}^{\alpha} &= \frac{1}{4}, &  w_{4t}^{\beta} &= \frac{1}{2} \bigl( w_{2t}^{\alpha}+w_{2t}^{\beta}\bigr)+1, \\
w_{4t+1}^{\alpha} &= \frac{1}{2} \bigl( w_{2t}^{\alpha}+w_{2t}^{\beta}\bigr), &  w_{4t+1}^{\beta} &= \frac{5}{4}, \\
w_{4t+2}^{\alpha} &= \frac{3}{4}, &  w_{4t+2}^{\beta} &= \frac{1}{2} \bigl( w_{2t+1}^{\alpha}+w_{2t+1}^{\beta}\bigr)-2, \\
w_{4t+3}^{\alpha} &= \frac{1}{2} \bigl( w_{2t+1}^{\alpha}+w_{2t+1}^{\beta}\bigr)-1, &\qquad  w_{4t+3}^{\beta} &= -\frac{1}{4}. 
\end{alignat*}

This already implies that $|w_{2t}^{\alpha}| \leq 3/4$ and $|w_{2t+1}^{\beta}| \leq 5/4$. Applying these inequalities combined with the inductive assumption to the remaining identities, we obtain the claim.  For example, we have
$$ |w_{4t+2}^{\beta}| \leq \frac{1}{2} \left(|w_{2t+1}^{\alpha}| + |w_{2t+1}^{\beta}|\right) +2 \leq \frac{1}{2}\left(6+\frac{5}{4}\right)+2 <6.$$

Moving on to the proof of our statement, by Proposition \ref{prop_char_rec} (or Proposition \ref{prop_moments_rec}) we have $v_{4t}^{\alpha} = v _{4t}^{\beta}.$
This implies
$$ |v_{4t+1}^{\alpha} - v_{4t+1}^{\beta}| \leq   |v_{4t+1}^{\alpha} - v_{4t}^{\alpha}| +  |v_{4t+1}^{\beta} - v_{4t}^{\beta}|  \leq 12.  $$
In a similar fashion we can bound $|v_{4t+j}^{\alpha} - v_{4t+j}^{\beta}| $ for $j=2,3$.
\end{proof}

\begin{remark}
    With additional effort it should be possible to prove that $ |v^{\alpha}_{t} - v^{\beta}_{t}| \leq 2$. However, for the purpose of our proof we only need to know that the difference is bounded uniformly in $t$.
\end{remark}

\begin{lemma} \label{lem:single_step}
There exists an absolute constant $C$ such that for all $t \in \N$ and $|\vartheta| \leq \pi$ we have
$$  \|X_t(\vartheta)\|_{\infty} \leq C |\vartheta|^3. $$
\end{lemma}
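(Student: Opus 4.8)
The plan is to treat the six entries of $X_{2t}(\vartheta)$ and the six entries of $X_{2t+1}(\vartheta)$ one at a time. Reading off the shapes of $D_0$, $D_1$ and of the Gaussian approximations $\alphap_s,\betap_s$ (recall $m^\alpha_{2s}=m^\beta_{2s}=0$ and $m^\alpha_{2s+1}=-m^\beta_{2s+1}=\tfrac12$), every such entry is of the form
\[
 \exp\!\bigl(\mu\, i\vartheta-\tfrac12 w\,\vartheta^2\bigr)-\tfrac12\sum_{l=1}^{2}\e(\epsilon_l\vartheta)\exp\!\bigl(\mu_l\, i\vartheta-\tfrac12 w_l\,\vartheta^2\bigr),
\]
where $\mu,\mu_l\in\{-\tfrac12,0,\tfrac12\}$, $\epsilon_l\in\{-1,0,1\}$, and $w,w_1,w_2\ge0$ are the appropriate variances. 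The decisive structural input is supplied by the recursions we already have: the variance recursion of Proposition~\ref{prop_moments_rec} forces $w=\tfrac12(w_1+w_2)+\rho$ with $\rho\in\{0,\tfrac14,1,\tfrac94\}$, while $w_1,w_2$ are $v^\alpha$ and $v^\beta$ at one and the same index, so Lemma~\ref{lem_var_diff_bound} gives $|w_1-w_2|\le 48$.

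Next I would factor out the dominant Gaussian. Put $\bar w=\tfrac12(w_1+w_2)\ge0$; since $\lvert\exp(-\tfrac12\bar w\vartheta^2)\rvert\le1$, pulling this factor out of the entry above rewrites it as $\exp(-\tfrac12\bar w\vartheta^2)\,h(\vartheta)$ with
\[
 h(\vartheta)=\exp\!\bigl(\mu\, i\vartheta-\tfrac12\rho\,\vartheta^2\bigr)-\tfrac12\sum_{l=1}^{2}\e(\epsilon_l\vartheta)\exp\!\bigl(\mu_l\, i\vartheta-\tfrac12\eta_l\,\vartheta^2\bigr),\qquad \eta_l=w_l-\bar w .
\]
The point of this step is that all parameters of $h$ now lie in a fixed compact set $K$ \emph{independent of $t$}: indeed $|\eta_l|=\tfrac12|w_1-w_2|\le24$ by Lemma~\ref{lem_var_diff_bound}, while $\mu,\mu_l,\epsilon_l,\rho$ take only finitely many values. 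A direct Taylor expansion at $\vartheta=0$, using the relation $w=\bar w+\rho$ and the prescribed values of $\mu,\mu_l,\epsilon_l$, shows that the constant, linear and quadratic coefficients of $h$ vanish for every parameter in $K$ (the dependence on $\eta_1,\eta_2$ cancels). This is just the assertion that $\Sp_t$ matches $S_t$ to second order, i.e.\ that $X_{2t}=D_0\widetilde S_t-\widetilde S_{2t}$ and $X_{2t+1}=D_1\widetilde S_t-\widetilde S_{2t+1}$ vanish to order $3$ entrywise.

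Finally I would invoke compactness to make the bound uniform in $t$. Because $h$ vanishes to order $3$ at the origin, the function $g(\vartheta)=h(\vartheta)/\vartheta^3$ extends analytically across $\vartheta=0$; by Taylor's formula with integral remainder it is jointly continuous in $\vartheta$ and in the parameters, hence bounded by some absolute constant $M$ on the compact set $[-\pi,\pi]\times K$. Thus $|h(\vartheta)|\le M|\vartheta|^3$ for $|\vartheta|\le\pi$, and combining this with $\lvert\exp(-\tfrac12\bar w\vartheta^2)\rvert\le1$ gives $|(X_t(\vartheta))_j|\le M|\vartheta|^3$ for each entry, which is the claim with $C=M$. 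The one genuine obstacle is uniformity in $t$: expanding an entry of $X_t$ directly would produce a constant growing like a power of the variances, and these grow linearly in $N$. The factoring step is exactly what removes this growth, trading the $t$-dependent Gaussians for the compact $t$-independent family $h$; the boundedness of the variance gap (Lemma~\ref{lem_var_diff_bound}) together with the exact additive constants in the variance recursion (Proposition~\ref{prop_moments_rec}) is what makes that family compact.
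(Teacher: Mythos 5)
Your proposal is correct and follows the same basic strategy as the paper's proof: in both cases one factors a dominant Gaussian out of each entry of $X_{2t}$, $X_{2t+1}$ so that the only surviving $t$-dependence sits in the variance gap $v^\alpha_s-v^\beta_s$, which Lemma~\ref{lem_var_diff_bound} confines to a bounded interval, and one then uses that the Taylor coefficients up to order $2$ cancel. The differences are in the packaging. The paper factors out $\alpha^*_{2t+1}$ and works entry by entry with explicit power-series tail estimates ($\frac{25}{2}|\vartheta|^3+\frac{25^2}{8}|\vartheta|^4+\e^{K\pi}|\vartheta|^3\le C_1|\vartheta|^3$, etc.), which directly produces a computable constant; you instead factor out $\exp(-\tfrac12\bar w\vartheta^2)$ with $\bar w=\tfrac12(w_1+w_2)$, observe that the remaining function $h$ ranges over a compact, $t$-independent parameter family (finitely many tuples $(\mu,\mu_l,\epsilon_l,\rho)$ times $\eta_1=-\eta_2\in[-24,24]$), verify that the order-$\le 2$ coefficients vanish identically on that family (the $\eta$-dependence cancels because $\eta_1+\eta_2=0$, and the rest is the mean/variance matching from Proposition~\ref{prop_moments_rec}), and conclude by continuity of $h(\vartheta)/\vartheta^3$ on a compact set. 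This is cleaner and avoids twelve separate computations, at the cost of being non-constructive as stated; since Theorem~\ref{thm_main} advertises \emph{effective} constants, you should add the remark (which your mention of the integral-remainder form already suggests) that $M$ can be taken as $\tfrac16\sup|h'''|$ over $[-\pi,\pi]\times K$, an explicitly computable quantity. One small caveat worth spelling out: the vanishing of the linear and quadratic coefficients does not hold for \emph{arbitrary} parameters in $K$, only on the slice where the mean and variance matching relations are imposed, so the compact set on which you apply continuity must be the one carved out by those constraints (which, as you note, is exactly the finitely-many-tuples-times-an-interval set).
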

\begin{proof}
First, observe that each component of $X_t(\vartheta)$, written as a power series, is $\mathcal{O}(\vartheta^3)$ because this is the case for $\widetilde{S}_t,\widetilde{S}_{2t+j}$.

Using Proposition \ref{prop_moments_rec} together with
$$ \log S_t^*(\vartheta)= i M_t \vartheta - \frac{1}{2} V_t \vartheta^2,$$
(where the logarithm is applied component-wise) we obtain the following relations:
\begin{align}
    \log S_{2t}^*(\vartheta) &= D_0(0) \log S_t^*(\vartheta) + \
    \frac{1}{2} \begin{pmatrix}
        0 & 0 & 1 & -1 & 0 & 0
    \end{pmatrix}^T \vartheta -
    \frac{1}{8} \begin{pmatrix}
        0 & 0 & 1 & 4 & 1 & 9
    \end{pmatrix}^T \vartheta^2, \label{eqn_approx_rec} \\
    \log S_{2t+1}^*(\vartheta) &= D_1(0) \log S_t^*(\vartheta) + \
    \frac{i}{2} \begin{pmatrix}
        0 & 0 & 1 & -1 & 0 & 0
    \end{pmatrix}^T \vartheta -
    \frac{1}{8} \begin{pmatrix}
        1 & 9 & 4 & 1 & 0 & 0
    \end{pmatrix}^T \vartheta^2. \nonumber
\end{align}
We now bound individual components of $X_{2t}$ and $X_{2t+1}$. Since the procedure is very similar in each case, we only perform it for only one component. For example let $\xi(\vartheta)$ denote the fourth component of $X_{2t}(\vartheta)$, namely 
$$\xi(\vartheta)=\beta_{4t+1}^*(\vartheta) - \frac{1}{2}( \alpha^*_{2t+1}(\vartheta) + \e(-\vartheta) \beta^*_{2t+1}(\vartheta)).$$ Extracting the fourth component of \eqref{eqn_approx_rec} and exponentiating, we get
$$ \beta_{4t+1}^*(\vartheta) = (\alpha^*_{2t+1}(\vartheta) \beta^*_{2t+1}(\vartheta))^{1/2} \exp\left(-\frac{1}{2}i \vartheta - \frac{1}{2}\vartheta^2\right), $$
where we take the principal value of the square root.
This yields
$$ \xi(\vartheta) = \frac{\alpha^*_{2t+1}(\vartheta)}{2} \left[ 2\left( \frac{\beta^*_{2t+1}(\vartheta)}{\alpha^*_{2t+1}(\vartheta)}\right)^{1/2} \exp\left(-\frac{1}{2}i \vartheta - \frac{1}{2}\vartheta^2\right) -1 - \exp(-i\vartheta) \frac{\beta^*_{2t+1}(\vartheta)}{\alpha^*_{2t+1}(\vartheta)}  \right]. $$
We have $ |\alpha^*_{2t+1}(\vartheta)| \leq 1$. Also, because $\xi(\theta) = \mathcal{O}(\vartheta^3)$ and $\alpha^*_{2t+1}(\vartheta) = 1 + \mathcal{O}(\vartheta)$, we get
$$ 2\left( \frac{\beta^*_{2t+1}(\vartheta)}{\alpha^*_{2t+1}(\vartheta)}\right)^{1/2} \exp\left(-\frac{1}{2}i \vartheta - \frac{1}{2}\vartheta^2\right) -1 -  \exp(-i\vartheta) \frac{\beta^*_{2t+1}(\vartheta)}{\alpha^*_{2t+1}(\vartheta)} = O(\vartheta^3).  $$
We now consider the terms of order $\geq 3$ of each summand, since the terms of order $\leq 2$ cancel out. First, we have
\begin{align*}    
\left(\frac{\beta^*_{2t+1}}{\alpha^*_{2t+1}}\right)^{1/2} \exp\left(-\frac{1}{2}i \vartheta - \frac{1}{2}\vartheta^2\right) &= \exp \left(-i\vartheta - \frac{1}{4}(v^{\beta}_{2t+1}-v^{\alpha}_{2t+1}+2)\vartheta^2 \right) \\
&= \sum_{k=0}^{\infty} \frac{1}{k!}\left(-i\vartheta - \frac{1}{4}(v^{\beta}_{2t+1}-v^{\alpha}_{2t+1}+2)\vartheta^2 \right)^k.\end{align*}
Because $|\vartheta| \leq \pi$, and $|v^{\beta}_{2t+1}-v^{\alpha}_{2t+1}| \leq 48$ as per Lemma \ref{lem_var_diff_bound}, we get
$$\left|i\vartheta + \frac{1}{4}(v^{\beta}_{2t+1}-v^{\alpha}_{2t+1}+2)\vartheta^2 \right| \leq K |\vartheta|$$
for some absolute constant $K$ (independent of $t$).
As a result, the contribution of terms of order $\geq 3$ are can be bounded by
\begin{align*} &\frac{1}{2}\left|\frac{i}{2}(v^{\beta}_{2t+1}-v^{\alpha}_{2t+1}+2)\vartheta^3 +\left( \frac{1}{4}(v^{\beta}_{2t+1}-v^{\alpha}_{2t+1}+2)\vartheta^2 \right)^2  \right| + \sum_{k=3}^{\infty}\frac{(K|\vartheta|)^k}{k!} \leq \\
& \frac{25}{2} |\vartheta|^3 + \frac{25^2}{8} |\vartheta|^4 + \exp(K \pi) |\vartheta|^3 \leq C_1 |\vartheta|^3
\end{align*}
for a suitable absolute constant $C_1$.

In a similar fashion, we can show that the total contribution of terms of order $\geq 3$ in $\frac{1}{2} \e(-\vartheta) \beta^*_{2t+1}(\vartheta)/\alpha^*_{2t+1}(\vartheta)$ is bounded by $C_2 |\vartheta|^3$ for some absolute constant $C_2$. Therefore,
$$  \left|\beta_{4t+1}^*(\vartheta) - \frac{1}{2} \alpha^*_{2t+1}(\vartheta) - \frac{1}{2} \e(-\vartheta) \beta^*_{2t+1}(\vartheta)\right| \leq (C_1 + C_2) |\vartheta|^3. $$

Repeating this argument for other components of $X_{2t}, X_{2t+1}$ and taking $C$ to be the maximal constant on the right-hand side, we get the result.
\end{proof}

We now use the lemma just proved to bound the error of approximation $\widetilde{S}_t(\vartheta)$ after multiple steps of the recursion.

\begin{lemma} \label{lem:multiple_steps}
There exists an absolute constant $K$ such that for all $t \in \N$ and $|\vartheta| \leq \pi$ we have
$$  \|\widetilde{S}_t(\vartheta)\|_{\infty} \leq K N |\vartheta|^3, $$
where $t\in\N$ and $N=\lvert t\rvert_{\tO\tL}$.
\end{lemma}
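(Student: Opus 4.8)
The plan is to prove the bound $\|\widetilde S_t(\vartheta)\|_\infty \le K N|\vartheta|^3$ by induction on the length of the binary expansion of $t$, unwinding the recurrence $\widetilde S_{2t+j}=D_j\widetilde S_t - X_{2t+j}$ down to the base case. Writing $t$ in binary as a sequence of digits, I would iterate this relation: after $\ell$ steps we obtain an expression of the form $\widetilde S_t = (\text{product of }D_j\text{'s})\,\widetilde S_{\lfloor t/2^\ell\rfloor} - \sum (\text{product of }D_j\text{'s})\, X_{\cdot}$, where each error term $X_\cdot$ is multiplied by a product of the matrices $D_0,D_1$ corresponding to the trailing digits of $t$ consumed so far. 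The key structural fact I would exploit is that $D_0$ and $D_1$ are (up to the phase entries) substochastic: each has row sums bounded by $1$ in absolute value, so that $\|D_j(\vartheta) w\|_\infty \le \|w\|_\infty$ for all $\vartheta$. Consequently, any finite product of these matrices is a contraction in the sup-norm, and each accumulated error term $\|X_\cdot(\vartheta)\|_\infty \le C|\vartheta|^3$ from Lemma~\ref{lem:single_step} survives the matrix multiplications without amplification.

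The heart of the argument is therefore to control \emph{how many} error terms $X_\cdot$ accumulate, and to show this count is $\LandauO(N)$ rather than $\LandauO(\log t)$. Here I expect the factor $N = \lvert t\rvert_{\tO\tL}$ to enter in an essential way. Naively, each of the $\approx \log_2 t$ digit-steps contributes an error term, which would only give a bound of order $\log t$, far worse than $N$. The resolution must come from the observation (foreshadowed by the identity $\alpha_{8t}=\alpha_{4t}=\beta_{8t}=\beta_{4t}$ in Proposition~\ref{prop_char_rec}) that when $t$ has a long run of equal digits — in particular long blocks of zeros or the periodic behaviour forced by trailing patterns — the vectors $S_t$, and hence the approximations, \emph{stabilize}, so the corresponding error terms $X_\cdot$ vanish. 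More precisely, I would show that $X_{2t+j}(\vartheta)=0$ whenever the relevant components of $S^*$ already satisfy the exact recurrence, which happens on the stretches of the expansion between consecutive occurrences of the block $\tO\tL$. Thus nonzero contributions to the error can only occur at the $N$ positions where a new block of ones begins.

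The main obstacle I anticipate is making this counting rigorous: I must set up the induction so that the ``bookkeeping'' variable is precisely $\lvert t\rvert_{\tO\tL}$, and verify that passing from $t$ to $\lfloor t/2\rfloor$ or $\lfloor t/4\rfloor$ increases $\lvert\cdot\rvert_{\tO\tL}$ by at most one, and only at block boundaries. The cleanest route is likely to prove the statement in the strengthened form
\[
\|\widetilde S_t(\vartheta)\|_\infty \le K\,\lvert t\rvert_{\tO\tL}\,|\vartheta|^3
\]
directly by induction on $\lfloor t/2\rfloor$ (or $\lfloor t/4\rfloor$), using the contraction property to absorb the matrix factor and Lemma~\ref{lem:single_step} to bound the newly introduced $X$-term by $C|\vartheta|^3$. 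In the inductive step, whenever the digit step creates no new $\tO\tL$ occurrence, I would argue that either $X_{2t+j}=0$ or it can be folded into the inductive hypothesis without increasing the block-count; when it does create a new block, $\lvert\cdot\rvert_{\tO\tL}$ increases by one and I may charge one factor of $C|\vartheta|^3$ against it. Choosing $K$ slightly larger than $C$ (to accommodate the base cases and the finitely many special initial values $S_0,\dots$) then closes the induction.
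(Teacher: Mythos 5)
Your overall skeleton matches the paper's: unwind the recursion $\widetilde S_{2t+j}=D_j\widetilde S_t-X_{2t+j}$ along the binary digits, use that $\|D_j(\vartheta)\|_\infty=1$ so products of the $D_j$ are contractions, and then argue that the accumulated error is $\LandauO(N)\cdot C|\vartheta|^3$ rather than $\LandauO(\log t)\cdot C|\vartheta|^3$. You have also correctly identified that this last step is the heart of the matter. But the mechanism you propose for it --- that $X_{2^jt}(\vartheta)=0$ (respectively $X_{2^jt+2^j-1}(\vartheta)=0$) throughout a run of equal digits, so that nonzero contributions occur only at the $N$ block boundaries --- is false, and this is a genuine gap. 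The stabilization $\alpha_{8t}=\alpha_{4t}=\beta_{8t}=\beta_{4t}$ from Proposition~\ref{prop_char_rec} only forces the \emph{first two} components of $X_{2^jt}$ to vanish for $j\ge 2$; the remaining four components involve $\alpha^*_{2^{j+1}t+1},\beta^*_{2^{j+1}t+1},\alpha^*_{2^{j+1}t+2},\beta^*_{2^{j+1}t+2}$, whose means and variances keep changing with $j$ (e.g.\ $v^\alpha_{4t+1}-v^\alpha_{4t}=1/4$ by Proposition~\ref{prop_moments_rec}), so these components are genuinely nonzero all along a run of zeros. With only the contraction bound $\|D_0^{k-j}\|_\infty\le 1$ you would then still pick up one full $C|\vartheta|^3$ per digit of the run, and your accounting collapses back to the naive $\LandauO(\log t)$ bound you set out to avoid.

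The paper's actual resolution is different and is the one missing ingredient you need: because the first two components of $X_{2^jt}$ vanish for $j\ge 2$, the product $D_0^{k-j}X_{2^jt}$ only sees the $4\times 4$ submatrix $\hat D_0$ of $D_0$ obtained by deleting the first two rows and columns, and this submatrix satisfies $\|\hat D_0^{\,l}(\vartheta)\|_\infty=2^{-(l-1)}$. Hence the contributions of the nonzero error terms inside a run of zeros are damped geometrically, and the whole run contributes at most $4C|\vartheta|^3$ in total (and symmetrically for runs of ones, where the \emph{last} two components of $X$ vanish and one uses $\hat D_1$). Since the number of maximal runs is $\LandauO(N)$, this yields the stated bound with $K=4C$. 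So you need to replace ``$X$ vanishes between block boundaries'' by ``two components of $X$ vanish, and the induced submatrix is a strict contraction with norm $2^{-(l-1)}$, so the errors within each run sum to a constant''; without that quantitative decay the argument does not close.
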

\begin{proof}
    By simple induction, for any $k \in \N$ we have
    $$\widetilde{S}_{2^k t} = D_0^k \widetilde{S}_t - \sum_{j=1}^{k} D_0^{k-j} X_{2^j t}.$$
    We now show that the sum is bounded uniformly in $k$. First, for $j=1$ and $j=k$ we use Lemma \ref{lem:single_step} , which gives
    \begin{equation} \label{eqn_first_bound}
        \| D_0^{k-j}(\vartheta) X_{2t}(\vartheta)  \|_{\infty} \leq C |\vartheta|^3.
    \end{equation} 
    
    Furthermore, by virtue of Proposition \ref{prop_char_rec} we have $\alpha_{8t}=\alpha_{4t}$ and $\beta_{8t} = \beta_{4t}$, which means that the first two components of $X_{2^j t}$ are $0$ for all $j \geq 2$. Let $\hat{X}_{2^j t}$ denote the vector obtained by deleting these two components. Then we can write in block matrix form
    $$
     D_0^{k-j} X_{2^j t}   = \begin{pmatrix}
         F & 0 \\ G & \hat{D}_0^{k-j} 
     \end{pmatrix} 
     \begin{pmatrix}
         0 \\ \hat{X}_{2^j t} 
     \end{pmatrix} = \begin{pmatrix}
         0 \\ \hat{D}_0^{k-j}\hat{X}_{2^j t} 
     \end{pmatrix},
    $$
    where $F$ is a $2\times 2$ matrix, $G$ a $4\times 2$ matrix, and $\hat{D}_0$ is the submatrix of $D_0$ obtained by deleting its first two rows and columns, namely
    $$\hat{D}_0(\vartheta) = \frac12\begin{pmatrix}
0&0&0&0\\
1&\e(-\vartheta)&0&0\\
1&1&0&0\\
\e(\vartheta)&\e(-\vartheta)&0&0
    \end{pmatrix}.  $$
    Also notice that $\|\hat{D}_0^l(\vartheta)\|_{\infty} = 1/2^{l-1}$ for any $l \geq 1$ and $\vartheta$, which  implies for $j<k$ the inequality
$$  \|  D_0^{k-j}(\vartheta) X_{2^j t}(\vartheta) \|_{\infty} = \|  \hat{D}_0^{k-j}(\vartheta) \hat{X}_{2^j t}(\vartheta) \|_{\infty} \leq \frac{1}{2^{k-j-1}} C |\vartheta|^3.$$
Combining this and \eqref{eqn_first_bound}, we get  
$$  \|\widetilde{S}_{2^k t}\|_{\infty} \leq \|\widetilde{S}_{t}\|_{\infty} + 2C|\vartheta|^3 + \sum_{j=2}^{k-1} \frac{1}{2^{k-j-1}} C |\vartheta|^3 \leq \|\widetilde{S}_{t}\|_{\infty} + 4C|\vartheta|^3.  $$
In other words, appending a block of zeros of arbitrary length to the binary expansion of $t$ increases $\|\widetilde{S}_{t}(\vartheta)\|_{\infty}$ by at most $4C|\vartheta|^3$.

A similar argument also works for appending a block of $1$'s so we omit some of the details. We have the identity
$$\widetilde{S}_{2^k t + 2^{k}-1} = D_1^k \widetilde{S}_t - \sum_{j=1}^{k} D_1^{k-j} X_{2^j t + 2^j -1}.$$
This time, for $j \geq 2$ we have that the last two components of $X_{2^j t + 2^j -1}$ are $0$. Let$\hat{X}_{2^j t + 2^j -1}$ be the vector obtained by deleting these components, and $\hat{D}_1(\vartheta)$ --- the matrix obtained by deleting the last two rows and columns from $D_1(\vartheta)$. Then for we get $2 \leq j \leq k-1$ we get
$$  \|  D_1^{k-j}(\vartheta) X_{2^j t + 2^j-1}(\vartheta) \|_{\infty} = \|  \hat{D}_1^{k-j}(\vartheta) \hat{X}_{2^j t + 2^j-1}(\vartheta) \|_{\infty} \leq \frac{1}{2^{k-j-1}} C |\vartheta|^3.$$
As a consequence, we again arrive at the inequality
$$  \|\widetilde{S}_{2^k t+2^k-1}(\vartheta)\|_{\infty} \leq \|\widetilde{S}_{t}(\vartheta)\|_{\infty} + 4C|\vartheta|^3.  $$
Hence, our claim holds with $K = 4C$, since $\widetilde{S}_0$ is the zero vector.
\end{proof}

Finally, we are ready to give an upper bound on the error $\widetilde{\gamma}_t$ of approximation of $\gamma_t$ by $\gamma^*_t$. We will use the equality
$$  \gamma^*_t(\vartheta) = \left(\alpha^*_t(\vartheta) \beta^*_t(\vartheta)\right)^{1/2} \cdot \begin{cases}
    1 &\text{if } t \text{ is even},\\
    \exp(-\vartheta^2/8) &\text{if } t \text{ is odd},
\end{cases}$$
which follows straight from the definition of $\gamma_t^*$. 

\begin{proposition} \label{prop_char_fun_approx}
There exists an absolute constant $L$ such that for all $t \in \N$ and $|\vartheta| \leq \pi$ we have
$$  |\widetilde{\gamma}_t(\vartheta)| \leq L N |\vartheta|^3, $$
where $t \in \N$ and $N=\lvert t\rvert_{\tO\tL}$.
\end{proposition}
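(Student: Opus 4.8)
The goal is to pass from the $\ell^\infty$ bound on the vector error $\widetilde S_t$ (Lemma \ref{lem:multiple_steps}) to a bound on the scalar error $\widetilde\gamma_t$. The plan is to write $\widetilde\gamma_t$ in terms of the components of $\widetilde S_t$ and the approximations $\alpha_t^*,\beta_t^*$, and then control each piece using the $N|\vartheta|^3$ bound together with the obvious modulus bounds $|\alpha_t^*(\vartheta)|,|\beta_t^*(\vartheta)|\le 1$. First I would recall that $\gamma_t=\tfrac12(\alpha_t+\beta_t)$ and that, by the displayed identity just above, $\gamma_t^*=(\alpha_t^*\beta_t^*)^{1/2}$ times $1$ or $\exp(-\vartheta^2/8)$ according to the parity of $t$. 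Writing $\alpha_t=\alpha_t^*+\widetilde\alpha_t$ and $\beta_t=\beta_t^*+\widetilde\beta_t$, we get
$$\widetilde\gamma_t=\tfrac12\bigl(\alpha_t^*+\beta_t^*\bigr)-\gamma_t^*+\tfrac12\bigl(\widetilde\alpha_t+\widetilde\beta_t\bigr).$$
The last term is immediately bounded by $\tfrac12\cdot 2\cdot KN|\vartheta|^3$ via Lemma \ref{lem:multiple_steps}, since $\widetilde\alpha_t,\widetilde\beta_t$ are among the components of $\widetilde S_t$ (indeed of $\widetilde S_{\lfloor t/2\rfloor}$, up to the indexing shift in the definition of $\Sq$).

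The remaining work is the purely Gaussian comparison term $G_t(\vartheta)\eqdef\tfrac12(\alpha_t^*(\vartheta)+\beta_t^*(\vartheta))-\gamma_t^*(\vartheta)$, which involves \emph{no} densities at all — only the explicit exponentials $\exp(m^\alpha_t i\vartheta-\tfrac12 v_t^\alpha\vartheta^2)$ and its $\beta$-counterpart, together with $\gamma_t^*=\exp(-\tfrac12 v_t\vartheta^2)$. By Proposition \ref{prop_variance_v} and the mean values $m_t^\alpha=-m_t^\beta\in\{0,\tfrac12\}$, all three exponents agree up to order $\vartheta^2$, so $G_t=\mathcal O(\vartheta^3)$ as a power series; the issue is only to make the constant uniform in $t$. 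Here I would factor out a common normal factor and expand: for $t$ even, $m_t^\alpha=m_t^\beta=0$ and $v_t=\tfrac12(v_t^\alpha+v_t^\beta)$, so $G_t=\tfrac12\bigl(\alpha_t^*+\beta_t^*\bigr)-\gamma_t^*$ is controlled by how far $v_t^\alpha,v_t^\beta$ lie from their average, which is at most $24$ by Lemma \ref{lem_var_diff_bound}; for $t$ odd the extra $\exp(-\vartheta^2/8)$ and the $\pm\tfrac12 i\vartheta$ phases must be carried along, but the mechanism is identical. This is exactly the style of estimate already executed for the single component $\xi$ in the proof of Lemma \ref{lem:single_step}: pull out $\gamma_t^*$, write the bracket as $\exp(\text{small})-1$ plus analogous terms, use that the order-$\le 2$ part vanishes, and bound the order-$\ge 3$ tail using $|\vartheta|\le\pi$ and the uniform variance-difference bound. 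The resulting bound is $C'|\vartheta|^3$ with an \emph{absolute} constant $C'$ (no factor of $N$), which is absorbed into the $LN|\vartheta|^3$ estimate.

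The main obstacle I anticipate is not the density error term — that is a one-line application of Lemma \ref{lem:multiple_steps} — but rather verifying that the Gaussian term $G_t$ is genuinely $\mathcal O(|\vartheta|^3)$ \emph{uniformly in $t$} despite $v_t^\alpha$ and $v_t^\beta$ themselves growing linearly with $N$. The point is that what enters $G_t$ after factoring out $\gamma_t^*$ is only the \emph{difference} $v_t^\alpha-v_t^\beta$ (bounded by Lemma \ref{lem_var_diff_bound}), not the individual variances, together with the bounded mean; one must be careful that when expanding $\exp(-\tfrac12(v_t^\alpha-v_t)\vartheta^2)$ the large common variance does not leak into the constant. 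Once one confirms that $G_t=\gamma_t^*\cdot\bigl[\tfrac12\exp(-\tfrac12(v_t^\alpha-v_t)\vartheta^2)+\tfrac12\exp(-\tfrac12(v_t^\beta-v_t)\vartheta^2)-1\bigr]$ (with phase factors for odd $t$), and uses $|\gamma_t^*|\le 1$ together with $|v_t^\alpha-v_t|,|v_t^\beta-v_t|\le 24$, the bound $|G_t|\le C'|\vartheta|^3$ follows, and setting $L=K+C'$ completes the proof.
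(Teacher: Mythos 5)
Your proposal is correct and follows essentially the same route as the paper: split $\widetilde\gamma_t$ into the density error $\tfrac12(\widetilde\alpha_t+\widetilde\beta_t)$, bounded by $KN|\vartheta|^3$ via Lemma \ref{lem:multiple_steps}, plus the purely Gaussian term $\tfrac12(\alpha_t^*+\beta_t^*)-\gamma_t^*$, which is $\mathcal O(|\vartheta|^3)$ with an absolute constant thanks to Lemma \ref{lem_var_diff_bound}. The only cosmetic difference is that you factor out $\gamma_t^*$ (getting a $\cosh$-minus-one type bracket) whereas the paper factors out $\alpha_t^*/2$ and writes the even case as a perfect square; both normalizations make the cancellation up to order $\vartheta^2$ and the uniformity in $t$ equally transparent.
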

\begin{proof}
    By Lemma \ref{lem:multiple_steps} for all $t \in \N$ we have 
   \begin{align*}
       |\widetilde{\alpha}_t(\vartheta)| &\leq K N |\vartheta|^3, \\
       |\widetilde{\beta}_t(\vartheta)| &\leq K N |\vartheta|^3,
   \end{align*}
   which means that also
   $$ \left| \gamma_t(\vartheta) - \frac{\alpha^*_t(\vartheta)+\beta^*_t(\vartheta)}{2} \right| \leq K N |\vartheta|^3. $$
   
   Furthermore, if $t$ is even, then we get
   $$  \frac{\alpha^*_t(\vartheta)+\beta^*_t(\vartheta)}{2} - \gamma^*_t(\vartheta) = \frac{\alpha^*_t(\vartheta)}{2} \left( \left(\frac{\beta^*_t(\vartheta)}{\alpha^*_t(\vartheta)}\right)^{1/2}-1\right)^2.  $$
   If $t$ is odd, then
   $$  \frac{\alpha^*_t(\vartheta)+\beta^*_t(\vartheta)}{2} - \gamma^*_t(\vartheta) = \frac{\alpha^*_t(\vartheta)}{2} \left(1+ \frac{\beta^*_t(\vartheta)}{\alpha^*_t(\vartheta)}-2\left(\frac{\beta^*_t(\vartheta)}{\alpha^*_t(\vartheta)}\right)^{1/2}  \exp \left(-\frac{\vartheta^2}{8}\right) \right). $$
   In either case, in the same fashion as in Lemma \ref{lem:single_step} we can show that
   $$\left| \frac{\alpha^*_t(\vartheta)+\beta^*_t(\vartheta)}{2} - \gamma^*_t(\vartheta) \right| \leq K_1 |\vartheta|^3$$
   for some constant $K_1$. Choosing $L=K+K_1$, we get the result.
\end{proof}

\subsection{An upper bound on the characteristic function}

We now obtain the second main ingredient of our proof, namely an upper bound on $|\gamma_t(\theta)|$.

\begin{proposition}\label{prop_char_fun_bound}
Assume that $t\in\N$. If $\lvert t\rvert_{\tO\tL}=N$, then for $|\vartheta| \leq \pi$ we have
$$|\gamma_t(\vartheta)| \leq \left(1 - \frac{1}{128} \vartheta^2 \right)^{\lfloor N/2 \rfloor}.
$$
\end{proposition}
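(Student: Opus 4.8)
The plan is to read the binary expansion of $t$ from the most significant bit and to accumulate the matrix product $S_t(\vartheta)=D_{\epsilon_0}(\vartheta)\cdots D_{\epsilon_{\ell-1}}(\vartheta)\,S_0(\vartheta)$ supplied by Proposition~\ref{prop_char_rec}, where $\epsilon_0,\dots,\epsilon_{\ell-1}$ are the binary digits of $t$ and $\ell$ its length. Since $N=\lvert t\rvert_{\tO\tL}$ is exactly the number of maximal blocks of $\tL$'s, the strategy is to locate one honest contraction factor inside this product for every second such block, producing the exponent $\lfloor N/2\rfloor$, and then to bound each such factor by $1-\vartheta^2/128$ using a crude inequality with plenty of slack.

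The source of the contraction is the phase-opposed averaging that happens as one crosses a block of $\tL$'s. Reading off the rows of $D_1$ one obtains identities of the shape
\[
\gamma_{4t+2}(\vartheta)=\tfrac14\bigl[(1+\e(\vartheta))\,\alpha_{2t+1}(\vartheta)+(1+\e(-\vartheta))\,\beta_{2t+1}(\vartheta)\bigr],
\]
in which the coefficient $\tfrac12(1+\e(\pm\vartheta))$ has modulus $\lvert\cos(\vartheta/2)\rvert<1$. I would work with the functional $q_t(\vartheta)\eqdef\tfrac12(\lvert\alpha_t(\vartheta)\rvert+\lvert\beta_t(\vartheta)\rvert)$, which dominates $\lvert\gamma_t(\vartheta)\rvert$ and never exceeds $1$ because every component of $S_t$ is a characteristic function. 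The key lemma to prove is that crossing a block of ones contracts $q$ by a factor at most $\lvert\cos(\vartheta/2)\rvert$, while crossing the separating $\tO$'s (whose rows in $D_0$ are averages of two terms with unit-modulus coefficients) does not increase it.

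Given such a per-block contraction, the proof concludes by multiplying the factors over the blocks: the $N$ blocks yield at least $\lfloor N/2\rfloor$ independent factors of $\lvert\cos(\vartheta/2)\rvert$, since one can only guarantee extraction on alternate blocks — after estimating $\gamma$ (or $q$) by moduli one lands on an $\alpha$/$\beta$-type quantity and must traverse the next block to return to a comparable state. Finally, for $\lvert\vartheta\rvert\le\pi$ one has $\lvert\cos(\vartheta/2)\rvert\le 1-\vartheta^2/\pi^2\le 1-\vartheta^2/128$, so that $\lvert\gamma_t(\vartheta)\rvert\le q_t(\vartheta)\le(1-\vartheta^2/128)^{\lfloor N/2\rfloor}$, and the generous constant $128$ absorbs both the losses from the pairing and any constants dropped in the block estimate.

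The main obstacle is that no contraction is available from the matrices individually: each of $D_0(\vartheta)$ and $D_1(\vartheta)$ has row-sum norm exactly $1$, so $\lVert S_t\rVert_\infty$ cannot decrease term by term. The decay is a genuinely second-order ($\vartheta^2$) phenomenon that emerges only from the interference between the $\alpha$- and $\beta$-components created by the conjugate phases $\e(\pm\vartheta)$ across a whole block. The delicate point is therefore to show that this interference survives the passage through a block of arbitrary length and, crucially, that one can link consecutive blocks without the gain being destroyed when the intermediate reduction to moduli discards phase information; it is precisely this loss of phase between blocks that forces the weaker exponent $\lfloor N/2\rfloor$ rather than $N$.
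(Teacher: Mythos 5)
Your proposal shares the paper's overall architecture — run the matrix recursion $S_t=D_{\varepsilon_0}\cdots D_{\varepsilon_\nu}S_0$ of Proposition~\ref{prop_char_rec}, extract one contraction factor per \emph{alternate} occurrence of $\tO\tL$ (whence $\lfloor N/2\rfloor$), and trace the gain to $\lvert \e(k\vartheta)+\e((k+1)\vartheta)\rvert=2\lvert\cos(\vartheta/2)\rvert\le 2-\vartheta^2/8$ — but it has a genuine gap at exactly the point you yourself flag as delicate: the per-block contraction lemma is asserted, not proved, and in the form you state it cannot be proved. First, the scalar functional $q_t=\tfrac12(\lvert\alpha_t\rvert+\lvert\beta_t\rvert)$ does not close under the recursion: by Proposition~\ref{prop_dens_rec} one has $\alpha_{4t+1}=\tfrac12(\alpha_{2t}+\e(\vartheta)\beta_{2t})$ but $\beta_{4t+1}=\tfrac12(\alpha_{2t+1}+\e(-\vartheta)\beta_{2t+1})$, so any induction must carry (at least) the three index pairs $2t,2t+1,2t+2$ — this is precisely why the paper works with the six-dimensional vector $S_t$. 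In most congruence classes the conjugate phases $\e(\vartheta)$ and $\e(-\vartheta)$ attach to components with \emph{different} indices (as in $\gamma_{4t+1}=\tfrac14(\alpha_{2t}+\e(\vartheta)\beta_{2t}+\alpha_{2t+1}+\e(-\vartheta)\beta_{2t+1})$), so the clean pairing $(1+\e(\vartheta))\alpha+(1+\e(-\vartheta))\beta$ you exhibit for $\gamma_{4t+2}$ is not available. Second, a block of $\tL$s does not by itself contract anything: rows $5$ and $6$ of $D_1$ are fixed by $D_1$, so $\lVert D_1^k(\vartheta)\rVert_\infty=1$ for all $k$, and likewise $\lVert D_1(\vartheta)D_0(\vartheta)\rVert_\infty=1$ (its first row is $\tfrac14(1,\e(\vartheta),1,\e(-\vartheta),0,0)$, with the conjugate phases in different columns, hence no cancellation in the row-sum norm). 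The second-order gain only materializes once the $\tO\tL$ interface has been composed with enough further matrices that the two consecutive phases land inside the \emph{same entry} of the product.

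The paper resolves this not by a structural per-block lemma but by a finite computation: after prepending a harmless leading $\tO$ (using $D_0S_0=S_0$), it selects disjoint length-$4$ windows ending in $\tO\tL$ — every other occurrence of $\tO\tL$ yields one, since consecutive occurrences are at distance $\ge 2$ — computes the four products $D_1D_0D_{\varepsilon}D_{\varepsilon'}$ explicitly, and checks by inspection that every row of each contains an entry with a subsum $\e(k\vartheta)+\e((k+1)\vartheta)$. This gives $\lVert D_1D_0D_{\varepsilon}D_{\varepsilon'}\rVert_\infty\le\tfrac1{16}(16-\vartheta^2/8)=1-\vartheta^2/128$, and submultiplicativity together with $\lVert D_0\rVert_\infty=\lVert D_1\rVert_\infty=\lVert S_0\rVert_\infty=1$ finishes the proof. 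To salvage your plan you would need to replace $q_t$ by $\lVert S_t(\vartheta)\rVert_\infty$ and actually establish the window estimate above — at which point you have reproduced the paper's argument.
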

\begin{proof}
The statement will follow immediately from the following, more general inequality:
$$ \|S_t(\vartheta)\|_{\infty} \leq \left(1 - \frac{1}{128} \vartheta^2 \right)^{\lfloor N/2 \rfloor} \|S_0(\vartheta)\|_{\infty}. $$
Let $t$ have binary expansion $\varepsilon_{\nu} \varepsilon_{\nu-1} \cdots \varepsilon_1 \varepsilon_0$. Then by Proposition \ref{prop_char_rec} we have
$$ S_t = D_{\varepsilon_0} D_{\varepsilon_1} \cdots D_{\varepsilon_{\nu}}S_0.  $$
Because $D_0 S_0 = S_0$, we can add a leading zero to the expansion of $t$, so that it contains $N$ occurrences of $\tO \tL$. Hence, it contains at least $\lfloor N/2 \rfloor$  non-overlapping strings from the set $\{ \tO\tO\tO\tL,\tO\tL\tO\tL,\tL\tO\tO\tL,\tL\tL\tO\tL   \}$ (strings of length $4$ ending with $\tO \tL$). These in turn correspond to ``disjoint'' subproducts of the form $D_1D_0 D_0 D_0 ,D_1D_0D_1D_0, D_1D_0 D_0D_1,  D_1 D_0D_1D_1  $ in the matrix product. We now bound the row-sum norm of each of these subproducts.


Letting $x = \e(\vartheta)$ for brevity, we have for example
\begin{align*}
&D_1(\vartheta) D_0^3(\vartheta) =\\
 & \frac{1}{16}\left(
\begin{matrix}
 3x+3+x^{-1} &  3x+4 & x^{-2}  & x^{-3} & 0 & 0\\
 2x^2+2x+1+x^{-1} + x^{-2} & 2x^2+2x+1+2x^{-1} & x^{-3} & x^{-4}  & 0 & 0 \\
2x^2+3x+1+x^{-1} &  2x^2+3x+2 & x^{-2} & x^{-3} & 0 & 0 \\
2x+3+x^{-2} & 3x+2+x^{-1}  & x^{-1}+x^{-3} &  x^{-2} + x^{-4} & 0 &0 \\
x^2+2x+2+x^{-1} &  x^2+3x+2  & x^{-1}+x^{-2} & x^{-2} + x^{-3}  & 0 & 0 \\
x^2+2x+2+x^{-1}   &  x^2+3x+2 &  x^{-1}+x^{-2} & x^{-2} + x^{-3} & 0 & 0 \\
\end{matrix}
\right).
\end{align*}
Observe that in each row there is an entry in which contains a subsum of the form $\e(k \vartheta) + \e((k+1) \vartheta)$ for some $k \in \Z$. The absolute value of this expression satisfies
$$|\e(k \vartheta) + \e((k+1) \vartheta)| = |1+\exp(i\vartheta)| = \sqrt{2(1+\cos \vartheta)} = 2 \left| \cos \frac{\vartheta}{2} \right| \leq 2- \frac{\vartheta^2}{8},$$
where we use the inequality $|\cos \varphi| \leq 1-\varphi^2/4$ for $|\varphi|\leq \pi/2$. By trivially bounding the remaining terms in each row, we get
$$ \|D_1(\vartheta) D_0^3(\vartheta)\|_{\infty} \leq \frac{1}{16} \left(16 - \frac{\vartheta^2}{8} \right) = 1 - \frac{\vartheta^2}{128}.$$
The same argument works for the other length-$4$ matrix products. Since $\|D_0(\vartheta)\|_{\infty} = \|D_1(\vartheta)\|_{\infty} = 1$ and $\|S_0(\vartheta)\|_{\infty}=1$, our result follows by submultiplicativity of $\| \cdot \|_{\infty}$.
\end{proof}

\subsection{Bounds on the variance}

Finally, we show that  $v_t \asymp N$, where $N=\lvert t\rvert_{\tO\tL}$ is the number of maximal blocks of $\tL$s in the binary expansion of $t$.

\begin{proposition} \label{prop_linear_var}
    Let $n\in\N$ and $N=\lvert t\rvert_{\tO\tL}$. We have
    $$\frac{3}{4} N \leq v_t \leq 5 N.$$
\end{proposition}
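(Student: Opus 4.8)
The plan is to prove both inequalities by strong induction on $t$, using the defining recursion \eqref{eqn_variance_def}. For $m\geq2$ the recursion reduces $v_m$ to strictly smaller indices: if $m=4t$ then $v_m=v_{2t}$, if $m=4t+2$ then $v_m=v_{2t+1}+1$, and if $m=2t+1$ is odd then $v_m=\tfrac12(v_t+v_{t+1})+\tfrac34$ with $t,t+1<m$. For the induction I need to know how $N(m)\eqdef\lvert m\rvert_{\tO\tL}$ and the length $\ell(m)$ of the \emph{least significant} maximal block of $\tL$s transform. Since shifting and padding leave interior blocks intact, for $m\geq1$ one has $N(2m)=N(m)$, $\ell(2m)=\ell(m)$, and $N(2m+1)=N(m)+\mathbb 1[m\text{ even}]$, with $\ell(2m+1)=\ell(m)+1$ if $m$ is odd and $\ell(2m+1)=1$ if $m$ is even. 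Writing an odd $m$ as $m=u\,2^{\ell+1}+2^{\ell}-1$ with $\ell=\ell(m)$ (binary form $(\cdots)\tO\,\tL^{\ell}$), a single carry gives $m+1=2^{\ell}(2u+1)$; hence $N(m+1)=N(m)$ and $\ell(m+1)=1$ when $u$ is even, while $N(m+1)=N(m)-1$ and $\ell(m+1)\geq2$ when $u$ is odd. For even $m$ one has $N(m+1)-N(m)\in\{0,1\}$. These are the only combinatorial inputs required.

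For the lower bound I would prove $v_m\geq\tfrac34 N(m)$ by strong induction. The cases $m=4t$ and $m=4t+2$ are immediate from $v_{4t}=v_{2t}$, $v_{4t+2}=v_{2t+1}+1$ and the invariance $N(4t)=N(2t)$, $N(4t+2)=N(2t+1)$. For odd $m=2t+1$ I insert the inductive bounds into $v_{2t+1}=\tfrac12(v_t+v_{t+1})+\tfrac34$, reducing the claim to
\[
\tfrac38\bigl(N(t+1)-N(t)\bigr)\geq\tfrac34\,\mathbb 1[t\text{ even}]-\tfrac34,
\]
which holds in every case by the sign constraints on $N(t+1)-N(t)$ recorded above. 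The constant $\tfrac34$ is precisely the largest one for which the branch $t\equiv0\pmod4$ (where $N(t+1)=N(t)$) still closes, which explains its appearance.

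The upper bound is the genuinely delicate part, because the naive hypothesis $v_m\leq5N(m)$ does \emph{not} survive the step $v_{2t+1}=\tfrac12(v_t+v_{t+1})+\tfrac34$ when $t$ is odd and $N(t+1)=N(t)$: there $v_{t+1}=v_{2^{\ell}w}=v_w+1$ with $w$ odd whose lowest block is a single $\tL$, and $5N$ is too weak for such $w$. I would therefore induct on the sharper statement
\[
v_m\leq 5N(m)-h(m),\qquad h(m)=\mathbb 1[m\text{ odd}]+5\cdot 2^{-\ell(m)}\quad(m\geq1),\ h(0)=0.
\]
The correction $5\cdot2^{-\ell}$ encodes the geometric relaxation the recursion performs inside a run of equal digits; it is forced by the extremal single-block numbers, where $v_{2^{b}-1}=4-5\cdot2^{-b}$ and hence $v_{2(2^{b}-1)}=5-5\cdot2^{-b}\uparrow5$, pinning down both the constant $5$ and the shape of $h$. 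With this hypothesis the even steps close instantly: $v_{4t}=v_{2t}$ and $v_{4t+2}=v_{2t+1}+1$ preserve $N$ and $\ell$ (the $+1$ exactly cancelling the odd indicator in $h$), so each reduces to the same bound for a smaller argument.

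The remaining work, and the main obstacle, is the odd step, which I would split by the parity of $t$ and the behaviour of $t+1$ described above, applying the refined hypothesis to $v_t$ and $v_{t+1}$ in each branch. The hardest branch is exactly the one that broke the naive bound, namely $t$ odd with $N(t+1)=N(t)$: there $\ell(2t+1)=\ell(t)+1$ and $\ell(t+1)=1$, and feeding $v_t\leq5N(t)-1-5\cdot2^{-\ell(t)}$ together with $v_{t+1}\leq5N(t)-\tfrac52$ into the recursion yields $v_{2t+1}\leq5N(t)-1-5\cdot2^{-\ell(t)-1}$, i.e.\ exactly $5N(2t+1)-h(2t+1)$. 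This equality is why the constants are sharp for the induction and shows no smaller correction than $5\cdot2^{-\ell}$ can work; the other branches (both parities of $t$, and the merging case $N(t+1)=N(t)-1$) close by the same computation with room to spare. Once $v_m\leq5N(m)-h(m)\leq5N(m)$ is established for all $m$, specialising to $m=t$ gives the stated upper bound and, with the lower bound above, completes the proof.
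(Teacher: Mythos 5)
Your proof is correct, but it takes a genuinely different route from the paper's. The paper first establishes the uniform bound $\lvert v_{t+1}-v_t\rvert\leq 3/2$ by inducting on the consecutive differences derived from \eqref{eqn_variance_def}, and then reads both bounds off a block-appending interpretation: appending a single binary digit never decreases $v_t$, the step $t\mapsto 4t+1$ increases it by at least $3/4$, and appending a whole block $\tO^p\tL^q$ increases it by at most $1+4=5$ (via $v_{2^kt}\in\{v_t,v_t+1\}$ and $\lvert v_{2^kt+2^k-1}-v_t\rvert\leq 4$); summing over the $N$ blocks gives $\tfrac34N\leq v_t\leq 5N$. You instead run a direct strong induction on $t$. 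For the lower bound the two arguments are comparably easy, and your reduction to $N(t+1)-N(t)\geq 0$ for even $t$ and $\geq -2$ for odd $t$ is valid. For the upper bound you correctly observe that the naive hypothesis $v_m\leq 5N(m)$ fails to propagate through $v_{2t+1}=\tfrac12(v_t+v_{t+1})+\tfrac34$ when $t$ is odd and no blocks merge, and your repaired hypothesis $v_m\leq 5N(m)-h(m)$, with $h(m)$ the sum of $5\cdot2^{-\ell(m)}$ and an indicator of $m$ being odd, does close in every branch; I checked that the branch you single out yields exact equality and that the remaining branches (including the even steps, where the $+1$ in $v_{4t+2}=v_{2t+1}+1$ cancels the parity indicator) close with slack. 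What your version buys is sharpness: it exhibits the extremal family $t=2(2^b-1)$ with $v_t=5-5\cdot2^{-b}$ and $N=1$, showing the constant $5$ cannot be improved; the paper's version buys brevity and rests on one clean auxiliary lemma. Your argument is complete once the routine branches you defer are written out.
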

\begin{proof}
We first prove by induction that 
\begin{equation} \label{eqn_var_diff}
    |v_{t+1}-v_t| \leq 3/2.
\end{equation}
This holds for the base case $t=0$. Using the relations \eqref{eqn_variance_def}, we get
    \begin{align*}
        v_{4t+1}-v_{4t} &=  \frac{1}{2}( v_{2t+1}-v_{2t}) + \frac{3}{4}, \\
        v_{4t+2}-v_{4t+1} &=  \frac{1}{2}(v_{2t+1}-v_{2t}) + \frac{1}{4}, \\
        v_{4t+3}-v_{4t+2} &=  \frac{1}{2}(v_{2t+2}-v_{2t+1}) - \frac{1}{4}, \\
        v_{4t+4}-v_{4t+3} &=  \frac{1}{2}(v_{2t+2}-v_{2t+1}) - \frac{3}{4}.
    \end{align*}
Our claim quickly follows from the inductive assumption.

Starting with the lower bound in the statement, by \eqref{eqn_variance_def} we get $v_{2t} \geq v_t$ and 
$$ v_{2t+1} - v_t = \frac{1}{2}(v_{t+1}-v_t) + \frac{3}{4} \geq 0, $$
where we have used \eqref{eqn_var_diff}. In other words, appending a digit to the binary expansion of $t$ does not decrease $v_t$. At the same time, we have
$$ v_{4t+1} = \frac{1}{2}(v_{2t}+v_{2t+1}) + \frac{3}{4} \geq \frac{3}{4}v_t + \frac{1}{4} v_{t+1} + \frac{9}{8}.$$
Subtracting $v_t$ from both sides and using \eqref{eqn_var_diff}, we get $$v_{4t+1}-v_t \geq \frac{3}{4}.$$
Hence, for all $p,q \geq 1$ appending the block $\tO^p \tL^q$ to the binary expansion of $t$  increases $v_t$ by at least $3/4$. The lower bound in the statement follows.

Moving on to the upper bound, for any $k \geq 1$ by \eqref{eqn_variance_def} we have 
$$v_{2^k t}=v_{2t} \in \{v_t, v_t + 1\},$$ 
as well as
$$ |v_{2^k t+2^k-1}-v_t| \leq|v_{2^k(t+1) -1}-v_{2^k(t+1)}| + |v_{2^k(t+1)}- v_{t+1}| + |v_{t+1}-v_t| \leq \frac{3}{2} + 1 + \frac{3}{2} = 4. $$
This means that for all $p,q \geq 1$ appending the block $\tO^p \tL^q$ to the binary expansion of $t$  increases $v_t$ by at most $5$.
\end{proof}

\subsection{Finishing the proof of the main result}
In order to complete the proof of Theorem~\ref{thm_main},
we recall the paper~\cite{SpiegelhoferWallner2021} by the second author and Wallner.
The line of argument we are going to present is analogous, however we establish a refinement of the error bound.
Our Proposition~\ref{prop_char_fun_bound} takes the role of Lemma~2.7 in that paper, while Proposition~\ref{prop_char_fun_approx} is analogous to \cite[Proposition~3.1]{SpiegelhoferWallner2021}.
In our argument, we will see that the Gauss integral
\[
\int_{-\infty}^{+\infty}
\exp\left(-\frac{v_t}{2} \theta^2-ik\vartheta\right)\,\mathrm d\vartheta
\]
is responsible for the emergence of a Gaussian in the main term of~\eqref{eqn_main}.

Let us start with the definition
\[\vartheta_0 = 16 \sqrt{\dfrac{\log N}{N}}\]
of a \emph{cutoff point}, at which we split our integral.
We have
\begin{align*}2\pi c_t(k) &= \int_{-\pi}^{\pi} \gamma_t(\vartheta) \e(-k \vartheta) \, \mathrm d \vartheta \\
&=  \int_{-\vartheta_0}^{\vartheta_0} \gamma_t^*(\vartheta) \e(-k \vartheta) \, \mathrm d \vartheta + \int_{-\vartheta_0}^{\vartheta_0} \widetilde{\gamma}_t(\vartheta) \e(-k \vartheta) \, \mathrm d \vartheta + \int_{\vartheta_0 \leq |\vartheta| \leq \pi} \gamma_t(\vartheta) \e(-k \vartheta) \, \mathrm d \vartheta \\
&= I_1 + I_2 + I_3.
\end{align*}
Expanding the definition of $\gamma_t^*$, we get
$$ I_1 =  \int_{-\infty}^{+\infty} \exp\biggl(-\frac{v_t}{2}\vartheta^2- i k \vartheta\biggr) \, \mathrm d \vartheta - \int_{\lvert\vartheta\rvert \geq \vartheta_0}  \exp\biggl(-\frac{v_t}{2}\vartheta^2- i k \vartheta\biggr) \, \mathrm d \vartheta = I_1^{(1)} - I_1^{(2)}. $$
By completing the square, we have
\[\frac{v_t}{2}\vartheta^2 +  i k \vartheta = \frac{v_t}{2}\left( \vartheta + \frac{ik}{v_t} \right)^2 + \frac{k^2}{2 v_t}.\]
Evaluating a complete Gauss integral, where we may discard the imaginary shift $ik/v_t$, we obtain
\[I_1^{(1)} = \sqrt{\frac{2\pi}{v_t}}\exp\biggl(-\frac{k^2}{2 v_t}\biggr),\]
which gives the main term after division by $2\pi$.
Meanwhile, the first error term satisfies
\[\bigl\lvert I_1^{(2)}\bigr\rvert \leq \int_{\lvert\vartheta\rvert \geq \vartheta_0}  \exp\biggl(-\frac{v_t}{2}\vartheta^2\biggr) \, \mathrm d \vartheta \leq \frac{2}{v_t \theta_0} \exp\biggl(-\frac{v_t}{2} \theta_0^2\biggr) \leq \frac{N^{-96-1/2}}{6\sqrt{\log N}}, \]
where the second inequality follows from the estimate
\[\int_{x_0}^{\infty} \exp\bigl(-cx^2\bigr) \, \mathrm dx \leq  \int_{x_0}^{\infty} \frac{x}{x_0} \exp\bigl(-cx^2\bigr) \, \mathrm dx = \frac{1}{2cx_0} \exp\bigl(-cx_0^2\bigr),\]
valid for any $c, x_0 > 0$, and the third one follows from $v_t \geq \frac{3}{4}N$ and the choice of $\vartheta_0$.

Furthermore, by Proposition \ref{prop_char_fun_approx} we have
\[ |I_2|  \leq \int_{-\vartheta_0}^{\vartheta_0} \bigl\lvert\widetilde{\gamma}_t(\vartheta) \bigr\rvert\, \mathrm d\vartheta \leq \int_{-\vartheta_0}^{\vartheta_0} \bigl\lvert\widetilde{\gamma}_t(\vartheta) \bigr\rvert, \mathrm d\vartheta \leq \int_{-\vartheta_0}^{\vartheta_0} LN\lvert\vartheta\rvert^3 \, \mathrm d\vartheta = \mathcal{O}\bigl(N \vartheta_0^4\bigr)=\mathcal{O}\biggl(\frac{\log^2 N}{N}\biggr).\]

Finally, by Proposition \ref{prop_char_fun_bound} we get
\begin{align*} |I_3| &\leq \int_{\vartheta_0 \leq |\vartheta| \leq \pi} \biggl(1 - \frac{1}{128} \vartheta^2 \biggr)^{\lfloor N/2 \rfloor} \, \mathrm d \vartheta  \leq 2 \int_{\vartheta_0}^{\pi} \exp\biggl(- \frac{N-1}{256} \vartheta^2\biggr) \, \mathrm d \vartheta \leq 2 \pi \exp \biggl(- \frac{N-1}{256} \vartheta_0^2\biggr) \\
&= 2\pi N^{-(N-1)/N} = \mathcal{O}\bigl(N^{-1}\bigr).
\end{align*}
The largest error term is thus $\mathcal{O}\bigl(N^{-1}\log^2 N\bigr)$.
This finishes the proof of our main theorem.

\subsection*{Acknowledgements}
The research topic treated in the present paper was proposed, independently, to the first author (by Maciej Ulas), and to the second author (by Jean-Paul Allouche).

Part of the research for this paper was conducted when B.~Sobolewski was visiting L.~Spiegelhofer at the Montanuniversit\"at Leoben.

\bibliographystyle{amsplain}
\bibliography{SobolewskiSpiegelhofer}

\bigskip
\begin{center}
\begin{tabular}{c}
Jagiellonian University,\\
Krak\'ow, Poland\\
bartosz.sobolewski@uj.edu.pl\\
ORCID iD: \texttt{0000-0002-4911-0062}
\end{tabular}
\end{center}

\smallskip
\begin{center}
\begin{tabular}{c}
Department Mathematics and Information Technology,\\
Montanuniversit\"at Leoben,\\
Franz-Josef-Strasse 18, 8700 Leoben, Austria\\
lukas.spiegelhofer@unileoben.ac.at\\
ORCID iD: \texttt{0000-0003-3552-603X}
\end{tabular}
\end{center}

\end{document}